\theoremstyle{plain}%
\newtheorem{theorem}{Theorem}%
\newtheorem{proposition}{Proposition}%
\newtheorem{lemma}{Lemma}%
\theoremstyle{definition}%
\newtheorem{definition}{Definition}%
\theoremstyle{remark}%
\newtheorem{remark}{Remark}%
\newcommand{\bC}{\mathbb{C}}
\newcommand{\bH}{\mathbb{H}}
\newcommand{\bN}{\mathbb{N}}
\newcommand{\bR}{\mathbb{R}}
\newcommand{\cA}{\mathcal{A}}
\newcommand{\cF}{\mathcal{F}}
\newcommand{\cI}{\mathcal{I}}
\newcommand{\cK}{\mathcal{K}}
\newcommand{\cO}{\mathcal{O}}
\newcommand{\cS}{\mathcal{S}}
\newcommand{\cT}{\mathcal{T}}
\newcommand{\rd}{\mathrm{d}}
\newcommand{\rc}{\mathrm{c}}
\newcommand{\diag}{\mathrm{diag}}
\newcommand{\eps}{\varepsilon}
\newcommand{\norm}[1]{\|{#1}\|}
\newcommand{\nnorm}[1]{\left\|{#1}\right\|}
\newcommand{\od}[2]{\dfrac{\rd{#1}}{\rd{#2}}}
\newcommand{\pd}[2]{\dfrac{\partial{#1}}{\partial{#2}}}
\title{Lax Equivalence for Hyperbolic Relaxation Approximations}
\author{Zeyu Jin \footnote{School of Mathematical Sciences, Peking University, Beijing 100871, China (\texttt{jinzy@pku.edu.cn}).} 
\and Ruo Li
\footnote{CAPT, LMAM and School of Mathematical Sciences, Peking University, Beijing 100871, China; Chongqing Research Institute of Big Data, Peking University, Chongqing 401121, China
(\texttt{rli@math.pku.edu.cn}).}}
\date{}
\begin{document}
\maketitle

\begin{abstract}
  This paper investigates the zero relaxation limit for general 
  linear hyperbolic relaxation systems and establishes the 
  asymptotic convergence of slow variables under the unimprovable 
  weakest stability condition, akin to the Lax equivalence theorem 
  for hyperbolic relaxation approximations.
  Despite potential high oscillations, the convergence of macroscopic 
  variables is established in the strong $L^\infty_t L^2_x$ sense 
  rather than the sense of weak convergence, time averaging, or 
  ensemble averaging.
  \newline

  \noindent \textbf{Keywords}: 
  Hyperbolic relaxation systems; 
  Asymptotic behavior;
  Lax equivalence;
  Stiff well-posedness
  \newline

  \noindent \textbf{MSC Classification}: 35B40; 35L45; 35E15
\end{abstract}

\section{Introduction}
This work is concerned with the Cauchy problems for first-order 
linear hyperbolic relaxation systems with stiff source terms in 
several spatial variables:
\begin{equation}\label{eq}
  \pd{U}{t} + \sum_{j = 1}^d A_j \pd{U}{x_j} = \frac{1}{\eps} Q U, 
  \quad U(0, x, \eps^{-1}) = U_0(x),
\end{equation}
where 
the relaxation time $\eps > 0$ is a small parameter, 
$U$ is the unknown $n$-vector function of 
$(t, x, \eta) = (t, x_1, x_2, \ldots, x_d, \eta) \in [0, +\infty) 
\times \bR^d \times [0, +\infty)$ with $\eta = \eps^{-1}$, and 
the matrices $Q \in \bR^{n \times n}$ and $A_j \in \bR^{n \times n}$ 
with $j = 1, 2, \ldots, d$ are constant matrices.
\Cref{eq} serves as a linearized model problem for general hyperbolic 
relaxation systems arising in fields such as
kinetic theory \cite{Cercignani1994dilute,jin2023NMR,
Platkowski1988DVM}, 
inviscid gas dynamics \cite{Zeng1999gas}, and 
non-equilibrium thermodynamics \cite{Chapman1939nonuniform,
Muller1993extended,Muller1998rational}.

This work investigates the behavior of solutions to 
\cref{eq} as the relaxation time $\eps$ approaches zero, 
referred to as the \textit{zero relaxation} or 
\textit{asymptotic limit}. 
It focuses on the slow time scale, commonly considered in 
many practical applications \cite{Kreiss1980problems}, 
rather than the fast time scale.
The primary concern is the convergence of slow 
variables in the zero relaxation limit under specific stability 
conditions of \cref{eq}.

Extensive research with a long history revolves around 
investigating the asymptotic behavior in the zero relaxation limit 
for hyperbolic relaxation systems. 
Interested readers seeking a more thorough exploration 
of this problem can refer to 
\cite{Natalini1999recent,Yong_singular_1999,Yong_basic_2002} and the 
references cited therein. 
Many works in this field trace their origins back to 
T.-P. Liu's seminal work \cite{Liu1987Hyperbolic} on general 
one-dimensional $2 \times 2$ systems. 
W.-A. Yong established a general theory for first-order quasi-linear 
symmetrizable hyperbolic systems 
\cite{Yong_singular_1992,Yong_singular_1999,Yong2001basic,
Yong_basic_2002}. 
Yong's structural stability conditions generalize the 
subcharacteristic conditions for one-dimensional $2 \times 2$ systems 
\cite{Liu1987Hyperbolic,Whitham1974linear} and the 
time-like conditions for one-dimensional scalar second-order 
hyperbolic systems \cite{deJager1975singular,Geel1978hyperbolic,
Schochet1987hyperbolic,vanHarten1985hyperbolic}. 
Several studies also assume that the systems satisfy specific 
entropy conditions \cite{Boillat1997hyperbolic,Chen1994hyperbolic,
Kawashima2004dissipative,Ruggeri2004stability,Tzavaras2005relative,
Yong2004entropy}. 
The investigation of convergence rates in asymptotic convergence can 
be found in \cite{Kurganov1997stiff,Teng1998first,Tveito1997rate},
and the design of relaxation schemes based on these  
theories is discussed in 
\cite{JinXin1995relaxation,Natalini1996Convergence}. 
The linear systems under examination, as formulated in \cref{eq}, 
have been studied by J. Lorenz and H. J. Schroll 
\cite{lorenz_stiff_1997,Lorenz1999Hyperbolic}, 
who established the equivalence in some sense between the 
asymptotic convergence of the solutions and the condition of 
stiff well-posedness. 
All these works mentioned above are based on the non-oscillation 
assumption, a simple case elucidated in \cite{Yong_basic_2002},
since dissipation provides a coercive term for energy estimates
\cite{Chern1995hyperbolic,Yong_singular_1999}. 
The literature for systems exhibiting oscillatory behavior is less  
extensive than in the dissipative case.
Such oscillatory systems arise in many problems in 
fluid mechanics and plasma physics, for instance, in the study of 
highly rotating fluids 
\cite{Babin1996global,Embid1996averaging,Fanelli2019Asymtptocs,
Feireisl2012Multiscale}, 
the quasi-neutral limit of plasma \cite{Grenier1996Oscillations}, 
and molecular dynamics 
\cite{Ascher1999oscillatory,Ebert1981Modelling}.
H.-O. Kreiss \cite{Kreiss1980problems} established uniform estimates 
in $\eps$ for these systems, assuming that each $A_j$ is Hermitian 
and $Q$ is skew-Hermitian. 
Pseudo-differential energy estimates for hyperbolic systems with 
singular perturbations are provided in \cite{Grenier1997Pseudo}.

Through a review of the historical trajectory of this problem, one 
can observe that one of the primary concerns 
is to identify a general set of structural properties or axioms for 
the systems and then develop the corresponding mathematical theories 
\cite{Yong2008interesting}. 
A natural question arises with a focus on the slow time scale: 
Under what weak structural conditions can slow variables still 
exhibit asymptotic convergence? 
Furthermore, given the development of hyperbolic relaxation 
approximations \cite{JinXin1995relaxation,Natalini1996Convergence} 
and their similarity to numerical approximations, another crucial 
question is whether it also has a corresponding Lax equivalence 
theorem \cite{Lax1956Survey}.
These questions form the central concern of the present 
research endeavor.

Specifically, we aim to eliminate the unnecessary non-oscillation 
assumption. 
As highlighted in \cite{Yong_basic_2002}, the weakest requirement for 
\cref{eq} is the condition of stiff well-posedness, equivalent to 
the $L^2$ boundedness of solutions in the asymptotic limit 
for arbitrary $L^2$ initial values.
At this point, the dynamics of fast variables should allow both 
fast dissipation and oscillation. 
However, a difficulty arises: energy estimates, as used in 
\cite{Yong_singular_1999}, cannot be applied due to the lack of 
dissipation. 
By employing the Fourier transform and solving the corresponding 
ordinary differential equations, the solutions can be 
expressed as a matrix exponential with a small parameter $\eps$. 
Nevertheless, directly studying its asymptotic behavior remains 
challenging.

This paper establishes the asymptotic convergence of slow 
variables in the zero relaxation limit for \cref{eq} under 
the weakest stability condition, namely, stiff well-posedness, 
which is notably weaker than the previous stability conditions.
To address the challenge posed by rapid oscillation in fast 
variables, we employ the Mori--Zwanzig technique 
\cite{E2011Principles,mori1965transport,zwanzig1960collision} 
to decouple fast and 
slow variables and to focus exclusively on the latter. 
The pivotal term involving the convolution of two matrix exponentials 
is estimated at the low-frequency region using a generalized 
Riemann--Lebesgue lemma for matrix exponentials.
By utilizing the Laplace transform, the proof of this lemma is 
reduced to estimating an integral of resolvents. 
A sharper version of resolvent estimates is applied to overcome 
limitations in the canonical Kreiss matrix theorem. 
To tackle the difficulty in estimating high-frequency 
error under the weakest condition, i.e., stiff well-posedness, we 
first establish the strong hyperbolicity of reduced systems by 
constructing 
a Picard sequence and using the Banach fixed-point theorem, and then 
indirectly bound the error at the high-frequency region.
Combining estimates at high- and low-frequency regions yields the 
desired convergence and the corresponding error estimate.

The rest of this paper is organized as follows. 
\Cref{sec:preliminaries} introduces several fundamental preliminaries 
that form the basis of this work and presents our main result. 
\Cref{sec:proof} demonstrates the proof of the asymptotic 
convergence and the corresponding error estimate of slow variables 
under stiff well-posedness.
\Cref{sec:concl} provides a summary and some concluding remarks. 
\Cref{app:kreiss} discusses the improved resolvent estimate of the 
Kreiss matrix theorem, which is essential in the proof of our main 
result.

\section{Preliminaries}
\label{sec:preliminaries}
This section introduces two fundamental concepts: the Kreiss matrix 
theorem and the stability conditions for \cref{eq}, with a particular 
emphasis on stiff well-posedness.
These concepts form the foundation of this study. In addition, 
the main result of this paper is presented.

\subsection{Kreiss matrix theorem}
The Kreiss matrix theorem 
\cite{kreiss_uber_1959,kreiss_initial-boundary_2004} 
is one of the most fundamental results of 
the well-posedness for Cauchy problems in the theory of partial 
differential equations.
For the sake of simplicity, let us introduce the following 
definitions. 
Similar definitions can be found in \cite{Yong_singular_1992}.
\begin{definition}[Quasi-stability and uniform quasi-stability]
  A matrix $M \in \bC^{n \times n}$ is called \textit{quasi-stable} 
  if 
  \begin{displaymath}
    \sup_{t \ge 0} \norm{e^{M t}} < + \infty.
  \end{displaymath}
  A set of matrices $\cF \subset \bC^{n \times n}$ is 
  called \textit{uniformly quasi-stable} if 
  \begin{displaymath}
    \sup_{M \in \cF} \sup_{t \ge 0} \norm{e^{M t}} < + \infty.
  \end{displaymath}
\end{definition}

Let us first recall the equivalent characterization of 
the quasi-stability of a single complex matrix in 
\cite{kreiss_initial-boundary_2004}.
The following lemma shows that a matrix is quasi-stable if and only 
if all its eigenvalues have real parts no greater than zero and all 
eigenvalues whose real parts are equal to zero are semi-simple.

\begin{lemma}[See Lemma 2.3.1 of \cite{kreiss_initial-boundary_2004}]
  \label{lemma:QS_character}
  For any $M \in \bC^{n \times n}$, the following two conditions are 
  equivalent.
  \begin{enumerate}
    \item The matrix $M$ is quasi-stable. 
    \item All eigenvalues $\lambda$ of the matrix $M$ have a real 
      part $\Re(\lambda) \le 0$. Furthermore, if $J_r$ is a Jordan 
      block of the Jordan matrix $J = S M S^{-1}$, corresponding 
      to an eigenvalue $\lambda_r$ of the matrix $M$ with 
      $\Re(\lambda_r) = 0$, then $J_r$ has dimension $1 \times 1$.
  \end{enumerate}
\end{lemma}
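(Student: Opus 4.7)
The plan is to prove both implications by reducing everything to the Jordan canonical form $M = S J S^{-1}$, since the matrix exponential decomposes block-by-block and $\|e^{Mt}\| \le \|S\|\|S^{-1}\| \|e^{Jt}\|$ with an inequality in the reverse direction as well. This reduces the question of quasi-stability of $M$ to the quasi-stability of each Jordan block $J_r = \lambda_r I + N_r$ separately, where $N_r$ is nilpotent of index equal to the block size $d_r$.

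For the direction $(2) \Rightarrow (1)$, I would use the closed-form identity $e^{J_r t} = e^{\lambda_r t}\sum_{k=0}^{d_r-1} \frac{t^k}{k!} N_r^k$. When $\Re(\lambda_r) < 0$ the exponential factor dominates every polynomial $t^k$, so $\sup_{t \ge 0} \|e^{J_r t}\| < \infty$ for free. When $\Re(\lambda_r) = 0$, the hypothesis $d_r = 1$ collapses the sum to the scalar $e^{\lambda_r t}$ of modulus $1$. Summing over blocks gives a uniform bound on $\|e^{Jt}\|$ and hence on $\|e^{Mt}\|$.

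For the converse $(1) \Rightarrow (2)$ I would argue by contrapositive, explicitly exhibiting a trajectory with unbounded norm in each failure mode. If some eigenvalue satisfies $\Re(\lambda) > 0$, picking a corresponding eigenvector $v$ gives $\|e^{Mt} v\| = e^{\Re(\lambda) t} \|v\| \to \infty$. If instead there is an eigenvalue with $\Re(\lambda) = 0$ whose Jordan block has size at least $2$, I would pick the first two vectors $v_1, v_2$ of a Jordan chain, satisfying $M v_1 = \lambda v_1$ and $M v_2 = \lambda v_2 + v_1$, and compute $e^{Mt} v_2 = e^{\lambda t}(v_2 + t v_1)$, whose norm grows linearly in $t$. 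Either case contradicts $\sup_{t \ge 0}\|e^{Mt}\| < \infty$.

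There is no substantive obstacle here; the lemma is a classical application of the Jordan canonical form, and the only care needed is the mildly tedious bookkeeping of passing between $M$ and $J$ via the similarity $S$ and verifying that the polynomial-times-exponential formula for $e^{J_r t}$ behaves as claimed on each block. I would expect the entire argument to be short and self-contained once the Jordan decomposition is invoked.
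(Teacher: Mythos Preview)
Your proposal is correct and is the standard Jordan-form argument. Note, however, that the paper does not actually supply its own proof of this lemma: it is stated with the attribution ``See Lemma 2.3.1 of \cite{kreiss_initial-boundary_2004}'' and then used as a black box. Your argument is essentially the one found in that reference, so there is nothing to compare.
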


The Kreiss matrix theorem gives several 
necessary and sufficient conditions for the uniform quasi-stability 
of a set of matrices. The theorem is stated as follows.
\begin{theorem}[Kreiss matrix theorem. See Theorem 2.3.2 of 
  \cite{kreiss_initial-boundary_2004}]
  \label{thm:KMT}
  Let $\cF$ denote a set of matrices in $\bC^{n \times n}$.
  The following four conditions are equivalent.
  \begin{enumerate}
    \item There exists a constant $K_1$ such that  
      $\norm{e^{M t}} \le K_1$ for each $M \in \cF$ and 
      $t \ge 0$.
    \item There exists a constant $K_2$ such that 
      \begin{displaymath}
        \norm{(z I - M)^{-1}} \le \frac{K_2}{\Re(z)}, 
      \end{displaymath}
      for each $M \in \cF$ and $\Re(z) > 0$.
    \item There exist constants $K_{31}$, $K_{32}$ such that 
      for each $M \in \cF$, there exists a 
      transformation $S = S(M)$ with 
      $\norm{S} + \norm{S^{-1}} \le K_{31}$,
      the matrix $S M S^{-1}$ is upper triangular,
      \begin{displaymath}
        S M S^{-1} = \begin{pmatrix}
          b_{11} & b_{12} & \cdots & b_{1n} \\
               & b_{22} & \cdots & b_{2n} \\
               &    & \ddots & \vdots \\
               &    &    & b_{nn} \\
        \end{pmatrix},
      \end{displaymath}
      the diagonal is ordered, 
      \begin{displaymath}
        0 \ge \Re(b_{11}) \ge \Re(b_{22}) \ge \cdots \ge 
        \Re(b_{nn}),
      \end{displaymath}
      and the upper diagonal elements satisfy the estimate 
      \begin{displaymath}
        |b_{ij}| \le K_{32} |\Re(b_{ii})|, \quad 
          1 \le i < j \le n.
      \end{displaymath}
    \item There exists a positive constant $K_4$ such that 
      for each $M \in \cF$, there exists a Hermitian 
      matrix $A_0 = A_0(M)$ such that 
      \begin{displaymath}
        K_4^{-1} I \le A_0 \le K_4 I, \quad A_0 M + M^* A_0 \le 0.
      \end{displaymath}
  \end{enumerate}
\end{theorem}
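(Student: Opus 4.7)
The plan is to prove the four-way equivalence by the cyclic chain of implications $1 \Rightarrow 2 \Rightarrow 3 \Rightarrow 4 \Rightarrow 1$, which is the classical roadmap for the Kreiss matrix theorem. Throughout, the aim is to transfer uniformity over $\cF$ from one quantitative description to another, so that each constant $K_1$, $K_2$, $K_{31}$, $K_{32}$, $K_4$ depends only on the others and on the ambient dimension $n$.

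The two easy directions are $1 \Rightarrow 2$ and $4 \Rightarrow 1$. For $1 \Rightarrow 2$, I would invoke the Laplace representation $(zI-M)^{-1} = \int_0^{\infty} e^{-zt} e^{Mt}\,\rd t$, valid absolutely for $\Re(z)>0$ thanks to $\norm{e^{Mt}}\le K_1$, and read off $\norm{(zI-M)^{-1}}\le K_1/\Re(z)$. For $4 \Rightarrow 1$, I would run a standard energy estimate: the quantity $E(t)=\langle A_0 e^{Mt}v_0, e^{Mt}v_0\rangle$ satisfies $\dot{E}(t)=\langle(A_0 M+M^*A_0)v,v\rangle\le 0$, so $E(t)\le E(0)$, and the spectral bound $K_4^{-1}I\le A_0\le K_4 I$ converts this into $\norm{e^{Mt}v_0}\le K_4\norm{v_0}$.

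For $3 \Rightarrow 4$, I would build $A_0$ by a diagonal reweighting in the triangular basis, setting $A_0 = S^* D S$ with $D = \diag(\beta,\beta^2,\ldots,\beta^n)$ for a parameter $\beta>0$ to be chosen. Writing $T=SMS^{-1}$, the inequality $A_0M+M^*A_0\le 0$ is equivalent to $DT+T^*D\le 0$. Its diagonal entries $2\beta^i\Re(b_{ii})$ are nonpositive, while its off-diagonal entries $\beta^i b_{ij}$ (for $i<j$) satisfy $|\beta^i b_{ij}|\le K_{32}\beta^i|\Re(b_{ii})|$ by Condition 3. A weighted AM--GM step absorbs each off-diagonal term into the two adjacent diagonal contributions, using the ordering $|\Re(b_{ii})|\le|\Re(b_{jj})|$ for $i<j$ and the geometric growth of the weights; choosing $\beta$ sufficiently large in terms of $K_{32}$ and $n$ renders $DT+T^*D$ negative semi-definite. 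The spectral two-sided bound on $A_0$ then follows from $\norm{S},\norm{S^{-1}}\le K_{31}$ together with the fixed choice of $\beta$.

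The main obstacle is the implication $2 \Rightarrow 3$, which is the quantitative heart of the theorem. My plan is to start from a Schur decomposition of $M$ into upper-triangular form with eigenvalues on the diagonal. A pole of the resolvent in the open right half-plane would contradict $\norm{(zI-M)^{-1}}\le K_2/\Re(z)$ as $z$ approaches it along any ray with $\Re(z)>0$, forcing $\Re(\lambda)\le 0$ for every eigenvalue; applying the same argument to higher-order poles, a purely imaginary eigenvalue contributes a blow-up of order at least $1/\Re(z)^2$ if its Jordan block has size $\ge 2$, which again contradicts the $K_2/\Re(z)$ bound, forcing semi-simplicity there. A permutation of basis vectors then enforces the ordering of the diagonal. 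The harder entrywise estimate $|b_{ij}|\le K_{32}|\Re(b_{ii})|$ is extracted by probing $(zI-M)^{-1}$ at judiciously chosen test points with $\Re(z)\sim|\Re(b_{ii})|$, reading off matrix entries such as $e_i^*(zI-M)^{-1}e_j$, and inverting the triangular block structure to reconstruct the off-diagonal entries of $T$; a final diagonal similarity renormalises them into the required form. The delicate part is the uniform bookkeeping: ensuring that the accumulated norm of the total similarity $S$ is controlled by a constant depending only on $K_2$ and $n$, independently of $M\in\cF$.
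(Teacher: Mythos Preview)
The paper does not give its own proof of this statement: \Cref{thm:KMT} is quoted verbatim from Theorem~2.3.2 of \cite{kreiss_initial-boundary_2004} and used as a black box, so there is no in-paper proof to compare against. Your cyclic scheme $1\Rightarrow 2\Rightarrow 3\Rightarrow 4\Rightarrow 1$ is precisely the classical route followed in the Kreiss--Lorenz reference, and your treatments of $1\Rightarrow 2$, $4\Rightarrow 1$, and $3\Rightarrow 4$ match the textbook arguments.

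That said, your sketch of $2\Rightarrow 3$ is where the real content lies, and as written it is more of an outline than a proof. Starting from a Schur form and reading off eigenvalue constraints from pole orders is fine, but the step ``probing $(zI-M)^{-1}$ at judiciously chosen test points \ldots\ and inverting the triangular block structure'' hides the entire difficulty. The standard argument proceeds by induction on $n$: one splits off the first row/column of the Schur form, applies the resolvent bound to the $(n-1)\times(n-1)$ lower-right block (which inherits a resolvent estimate with the same constant), and then controls the first row by a separate diagonal rescaling whose size is governed by $K_2$ and $n$. Your description does not make visible either the inductive structure or the mechanism that keeps $\norm{S}+\norm{S^{-1}}$ uniformly bounded; without that, the ``uniform bookkeeping'' you flag as delicate is not actually carried out. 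If you intend to supply a proof rather than cite the reference, this implication needs to be written out in full.
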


Among other things, the theorem establishes the equivalence of the 
uniform boundedness of semigroups and certain resolvent estimates 
for a set of matrices. 
However, Kreiss's resolvent condition sometimes fails to provide  
sharp estimates for well-posed Cauchy problems.
To elucidate an essential technique used in our proof and the 
motivation behind improving the resolvent estimates in the Kreiss 
matrix theorem, let us consider the following Cauchy problem:
\begin{equation}\label{eq:cauchy}
  \pd{u}{t} = \cA u + f(t, x), \quad u(0, x) = 0,
\end{equation}
where $u$ is the unknown $n$-vector function of 
$(t, x) = (t, x_1, x_2, \ldots, x_d) \in [0, +\infty) \times \bR^d$,
the source term $f$ is an $n$-vector function of $(t, x)$,
the differential operator $\cA$ is defined by 
$(\cA v)^{\wedge}(\xi) := A(\xi) \hat{v}(\xi)$, where 
$\hat{v}$ is the Fourier transform of an $n$-vector function $v$ 
defined on $\bR^d$, 
and the symbol of $\cA$, denoted as $A(\xi)$, is an 
$n \times n$-matrix function of $\xi \in \bR^d$.
Under the assumption of well-posedness 
\cite{kreiss_initial-boundary_2004} of \cref{eq:cauchy}, the Kreiss 
matrix theorem implies the existence of constants $\alpha \in \bR$ 
and $K > 0$ independent of $\xi$ such that the following resolvent 
estimate holds:
\begin{equation}\label{eq:cauchy_KMTbound}
  \|{(z I - A(\xi))^{-1}}\| \le \frac{K}{\Re(z) - \alpha}, 
\end{equation}
for each $z \in \bC$ whose real part satisfies $\Re(z) > \alpha$.
By applying the Fourier transform and the Laplace transform to both 
sides of \cref{eq:cauchy}, one can derive the following equation:
\begin{displaymath}
  z \tilde{u}(z, \xi) = A(\xi) \tilde{u}(z, \xi) 
    + \tilde{f}(z, \xi),
\end{displaymath}
under some appropriate assumptions on the functions $A(\xi)$ and 
$f(t, x)$. Here, $\tilde{u}(z, \xi)$ is the Laplace transform with 
respect to the variable $t$ of the Fourier transform $\hat{u}(t, \xi)$
of the function $u(t, x)$.
Therefore, it follows that $\tilde{u}(z, \xi) = (z I - A(\xi))^{-1} 
\tilde{f}(z, \xi)$.
The inverse Laplace transform of $\tilde{u}(z, \xi)$ is determined 
by  
\begin{displaymath}
  \hat{u}(t, \xi) = \frac{1}{2 \pi i} 
    \int_{\gamma - i \infty}^{\gamma + i \infty}
    e^{z t} (z I - A(\xi))^{-1} \tilde{f}(z, \xi) \,\rd z,
\end{displaymath}
where $\gamma$ is a real number such that the contour line lies in 
the region of convergence of $\tilde{u}(z, \xi)$ for each 
$\xi \in \bR^n$, and the integral is understood in the sense of 
the principal value.
One may estimate $\hat{u}(t, \xi)$ as 
\begin{displaymath}
  \|{\hat{u}(t, \xi)}\| \le \frac{e^{\gamma t}}{2 \pi} 
    \int_{- \infty}^{+ \infty} \|{(z I - A(\xi))^{-1}}\|
    \cdot \|{\tilde{f}(z, \xi)}\| \,\rd y,
\end{displaymath}
where $z = \gamma + i y$. The Kreiss matrix theorem provides the 
resolvents with an upper bound, as given in 
\cref{eq:cauchy_KMTbound}, which depends only on the real part 
of $z$. However, this bound is too rough, as for fixed $\xi$ and 
$\gamma$, it holds that $\|{(z I - A(\xi))^{-1}}\| = \cO(|y|^{-1})$
as $|y| \rightarrow + \infty$,
while \cref{eq:cauchy_KMTbound} cannot capture the decay of the 
resolvents.
To obtain a more accurate upper bound for $\hat{u}(t, \xi)$, we 
require sharper resolvent estimates.

Henceforth, we denote the open right-half complex plane as 
\begin{displaymath}
  \bH = \left\{ z \in \bC \,\big|\, \Re(z) > 0 \right\}.
\end{displaymath}
For a matrix $M \in \bC^{n \times n}$, the spectrum of $M$ is 
denoted as $\sigma(M)$, and we define a new measurement of uniform 
quasi-stability as:
\begin{displaymath}
  \cK(M) := \sup_{z \in \bH} \frac{\norm{(z I - M)^{-1}}}
  {\max_{\lambda \in \sigma(M) \setminus \bH} |z - \lambda|^{-1}}.
\end{displaymath}
When $\sigma(M)$ is a subset of the half-plane $\bH$, we set 
$\cK(M) := + \infty$.

The improved resolvent estimates in the Kreiss matrix theorem are 
stated as follows.

\begin{theorem}\label{thm:IKMT}
  Let $\cF$ be a set of matrices in $\bC^{n \times n}$.
  The matrix set $\cF$ is uniformly quasi-stable if and only if 
  \begin{equation}\label{eq:uniformKM}
    \sup_{M \in \cF} \cK(M) < + \infty.
  \end{equation}
  In this case, there exists a positive constant $K$ such that 
  \begin{equation}\label{eq:main_resolvent}
    \norm{(z I - M)^{-1}} \le K \max_{\lambda \in \sigma(M)} 
    |z - \lambda|^{-1},
  \end{equation}
  for each $M \in \cF$ and $z \in \bH$.
\end{theorem}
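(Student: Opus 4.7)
The plan is to separate the biconditional into two directions. The backward implication reduces almost immediately to condition~(2) of the classical Kreiss matrix theorem, whereas the forward implication carries the real content and in fact simultaneously produces the sharpened estimate \cref{eq:main_resolvent}.

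For the backward direction, I would first argue that the hypothesis $\sup_{M \in \cF} \cK(M) < +\infty$ forces $\sigma(M) \cap \bH = \emptyset$ for every $M \in \cF$: were some eigenvalue $\lambda_0$ to lie in $\bH$, then as $z \to \lambda_0$ along $\bH$ the resolvent norm $\norm{(zI - M)^{-1}}$ would blow up while the normalizing factor $\max_{\lambda \in \sigma(M) \setminus \bH} |z - \lambda|^{-1}$ would remain bounded away from $+\infty$ (and the limiting case $\sigma(M) \subset \bH$ is excluded by the convention $\cK(M) := +\infty$). Once $\sigma(M) \subset \{\Re \le 0\}$ is secured, the elementary bound $|z - \lambda| \ge \Re(z)$ for $z \in \bH$ and $\lambda \in \sigma(M)$ converts the definition of $\cK(M)$ into the classical Kreiss resolvent estimate
\begin{displaymath}
  \norm{(zI - M)^{-1}} \le \frac{\cK(M)}{\Re(z)},
\end{displaymath}
so condition~(2) of \cref{thm:KMT} delivers uniform quasi-stability of $\cF$.

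For the forward direction, I would invoke condition~(3) of \cref{thm:KMT} to replace each $M \in \cF$ by an ordered upper triangular form $T = S M S^{-1} = (b_{ij})$ with bounded conditioning $\norm{S} + \norm{S^{-1}} \le K_{31}$ and, crucially, the off-diagonal control $|b_{ij}| \le K_{32} |\Re(b_{ii})|$ for $i < j$. The upper triangular structure makes $(zI - T)^{-1} = (c_{ij})$ computable via the chain expansion
\begin{displaymath}
  c_{ij} = \sum_{i = k_0 < k_1 < \cdots < k_m = j}
    \frac{b_{k_0 k_1} b_{k_1 k_2} \cdots b_{k_{m-1} k_m}}
    {(z - b_{k_0 k_0})(z - b_{k_1 k_1}) \cdots (z - b_{k_m k_m})}.
\end{displaymath}
For $z \in \bH$, the elementary inequality $|z - b_{ii}| \ge |\Re(b_{ii})|$ upgrades the Kreiss off-diagonal bound into $|b_{ij}| \le K_{32} |z - b_{ii}|$, which telescopes each chain down to its final denominator and yields $|c_{ij}| \le 2^{j - i - 1} K_{32}^{j - i} |z - b_{jj}|^{-1}$. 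Summing over $j$, transforming back by $S$, and using $\sigma(M) \subset \{\Re \le 0\}$ (guaranteed by uniform quasi-stability through \cref{lemma:QS_character}) produces
\begin{displaymath}
  \norm{(zI - M)^{-1}} \le K \max_{\lambda \in \sigma(M)} |z - \lambda|^{-1}
    = K \max_{\lambda \in \sigma(M) \setminus \bH} |z - \lambda|^{-1},
\end{displaymath}
with a constant $K$ depending only on $n$, $K_{31}$, and $K_{32}$. This delivers both \cref{eq:uniformKM} and the sharp estimate \cref{eq:main_resolvent}.

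The main obstacle I anticipate is really compressed into the chain-expansion step: one must organize the combinatorics so that each off-diagonal factor $|b_{k_{l-1} k_l}|$ kills exactly one denominator $|z - b_{k_{l-1} k_{l-1}}|$, leaving the terminal factor $|z - b_{jj}|^{-1}$ intact, while uniformly controlling the $2^{j - i - 1}$ competing chains. This is precisely what upgrades the classical $1/\Re(z)$ decay of Kreiss to eigenvalue-distance decay. Once the telescoping is executed and combined with the uniform Kreiss constants, the remainder is bookkeeping through the similarity transform.
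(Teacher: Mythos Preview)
Your proposal is correct and follows essentially the same route as the paper: the backward direction is identical, and the forward direction likewise invokes condition~(3) of the Kreiss matrix theorem together with the telescoping inequality $|b_{ij}| \le K_{32}\,|z - b_{ii}|$ for $z \in \bH$. The only cosmetic difference is that the paper factors $(zI - T)^{-1} = \bigl(I - (zI-D)^{-1}N\bigr)^{-1}(zI-D)^{-1}$ and bounds the unit upper-triangular inverse via a separate lemma, whereas you compute the same object through the explicit entrywise chain expansion; both are the Neumann series organized slightly differently.
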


The proof of this theorem will be provided in \Cref{app:kreiss}.

\begin{remark}
  Numerous studies have been dedicated to improving the uniform upper 
  bounds of the semigroups under Kreiss's resolvent condition, 
  as investigated in \cite{leveque_resolvent_1984,van_linear_1993}. 
  The focus of \Cref{thm:IKMT} is on the converse, i.e., improving 
  the resolvent estimates for a uniformly quasi-stable matrix set.
  Several attempts have been made in this direction. For instance,  
  J. Miller \cite{miller_resolvent_1968} estimated the resolvents 
  along specific contours in the left-half complex plane by 
  categorizing the spectrum of each matrix. 
  R. Zarouf \cite{zarouf_sharpening_2009} refined 
  the resolvent estimates for power-bounded matrices using 
  Bernstein-type inequalities for rational functions.
  Detailed discussions of these results and their relations to 
  \Cref{thm:IKMT} are presented in \Cref{app:kreiss}.
\end{remark}

\subsection{Stability conditions}

Let us now review several stability conditions for \cref{eq} in 
the existing literature, with a specific focus on the concept of 
stiff well-posedness. 
We aim to elucidate the relations between these stability conditions 
and present the main result in this paper.

The first and second stability conditions proposed by W.-A. Yong 
\cite{Yong2001basic} are presented as follows.
Under these conditions, Yong developed a theory of singular 
perturbation for general first-order quasi-linear symmetrizable 
hyperbolic systems with stiff source terms 
\cite{Yong_singular_1992,Yong_singular_1999,
Yong2001basic,Yong_basic_2002}. 
A basic assumption of Yong's stability conditions is that zero is a 
semi-simple eigenvalue of the matrix $Q$, i.e., there exists an 
invertible $n \times n$ matrix $P$ and an invertible $r \times r$ 
matrix $B$ such that 
\begin{equation}\label{eq:Yong1_PQPinv}
  P Q P^{-1} = \begin{pmatrix}
    0 & 0 \\ 0 & B
  \end{pmatrix}.
\end{equation}

\begin{definition}[Yong's stability conditions \cite{Yong2001basic}]
  We say that \cref{eq} satisfies 
  \textit{Yong's first stability condition} 
  if the following requirements hold:
  \begin{enumerate}[label={(\roman{enumi})}]
    \item \label{cond:Yong1}
    All the eigenvalues of the matrix $B$ in \cref{eq:Yong1_PQPinv} 
    have negative real parts;
    \item \label{cond:Yong2}
    \Cref{eq} is \textit{symmetrizable hyperbolic}, i.e., there 
    exists a positive definite Hermitian matrix $A_0$ such that 
    $A_0 A_j = A_j^* A_0$
    for each $j = 1, 2, \ldots, d$;
    \item \label{cond:Yong3w}
    The hyperbolic and source terms satisfies 
    \begin{displaymath}
      A_0 Q + Q^* A_0 \le 0.
    \end{displaymath}
  \end{enumerate}
  We say that \cref{eq} satisfies 
  \textit{Yong's second stability condition} 
  if it satisfies the requirements 
  \ref{cond:Yong1}, \ref{cond:Yong2}, and 
  \begin{enumerate}[start=3,label={(\roman{enumi})$^\prime$}]
    \item \label{cond:Yong3s}
    The hyperbolic and source terms are coupled in the sense that 
    \begin{displaymath}
      A_0 Q + Q^* A_0 \le - P^* \begin{pmatrix}
        0 & 0 \\ 0 & I_r
      \end{pmatrix} P.
    \end{displaymath}
  \end{enumerate}
\end{definition}

\begin{remark}[Non-oscillation assumption]\label{rmk:yong->noa}
  \Cref{cond:Yong1} is equivalent to the statement that the matrix 
  $Q$ is quasi-stable and has no purely imaginary eigenvalues 
  different from zero, which is just the 
  \textit{non-oscillation assumption} \cite{Yong_basic_2002}.
  Under the basic assumption presented in \cref{eq:Yong1_PQPinv}, 
  \Cref{cond:Yong3s} implies \Cref{cond:Yong1} as shown in 
  \cite[Lemma 2.1]{Yong_singular_1999}.
\end{remark}

\begin{remark}\label{rmk:relation_yong}
  Symmetrizable hyperbolicity is an easily verifiable condition for 
  the following first-order partial differential equations with 
  non-stiff source terms:
  \begin{equation}\label{eq:1st}
    \pd{U}{t} + \sum_{j = 1}^d A_j \pd{U}{x_j} = Q U.
  \end{equation}
  However, it is not the weakest condition to ensure the 
  well-posedness of \cref{eq:1st} as pointed out in 
  \cite{kreiss_initial-boundary_2004}.
  \Cref{cond:Yong2,cond:Yong3w} in Yong's first stability condition 
  can be seen as a natural extension of symmetrizable hyperbolicity 
  in the case of hyperbolic relaxation systems presented in \cref{eq}.
\end{remark}

Let us now introduce the definition of stiff well-posedness,
which forms the foundation of this work.

\begin{definition}[Stiff well-posedness]
  \label{def:stiff_well_posedness}
  We say that \cref{eq} satisfies the condition of 
  \textit{stiff well-posedness} or it is \textit{stiffly well-posed}
  if the matrix set, 
  \begin{displaymath}
    \cF_0 := \left\{ H(\xi, \eta) \,\big|\, \xi \in \bR^d, \eta \ge 0 
    \right\},
  \end{displaymath}
  is uniformly quasi-stable, where 
  \begin{equation}\label{eq:H}
    H = H(\xi, \eta) := \eta Q - i \sum_{j = 1}^d \xi_j A_j.
  \end{equation}
\end{definition}

\begin{remark}[Strong hyperbolicity]
  Parallel to the discussions in \Cref{rmk:relation_yong}, stiff 
  well-posedness can be regarded as a natural extension of 
  strong hyperbolicity \cite{kreiss_initial-boundary_2004} 
  within the context of hyperbolic relaxation systems.
  We say that \cref{eq:1st} is \textit{strongly hyperbolic} if the 
  matrix set, 
  \begin{displaymath}
    \cF_{00} := \left\{ H(\xi, 0) \,\big|\, \xi \in \bR^d \right\},
  \end{displaymath}
  is uniformly quasi-stable, where $H(\xi, \eta)$ is defined by 
  \cref{eq:H}.
  According to \cite{kreiss_initial-boundary_2004}, strong 
  hyperbolicity is equivalent to the well-posedness of \cref{eq:1st}.
\end{remark}

The condition of stiff well-posedness is the most minimal  
stability requirement to ensure the well-posedness of \cref{eq} in 
the asymptotic limit. it is equivalent to 
the $L^2$ boundedness of solutions in the asymptotic limit for 
arbitrary $L^2$ initial values in some sense, as shown by the 
following proposition.

\begin{proposition}[See Theorem 2.2 of \cite{Yong_basic_2002}]
  \label{prop:not_SWP}
  Assume that \cref{eq} is strongly hyperbolic. If 
  \cref{eq} is not stiffly well-posed, then for any $t > 0$, there 
  exists $U_0 \in L^2$ such that 
  \begin{displaymath}
    \mathop{\lim \sup}_{\eps \to 0} 
    \norm{U(t, \cdot, \eps^{-1})}_{L^2} = + \infty.
  \end{displaymath}
\end{proposition}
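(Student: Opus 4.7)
The plan is to reduce the claim to the Banach--Steinhaus theorem applied to a family of solution operators on $L^2(\bR^d)$. After Fourier transform in $x$, the Cauchy problem decouples fibrewise as $\partial_t \hat U = H(\xi, \eta) \hat U$, so $\hat U(t, \xi, \eta) = e^{t H(\xi, \eta)} \hat U_0(\xi)$. By Plancherel, the solution map $T_\eta \colon U_0 \mapsto U(t, \cdot, \eta)$ is the Fourier multiplier with matrix symbol $e^{t H(\xi, \eta)}$, and $\norm{T_\eta}_{L^2 \to L^2} = \sup_{\xi \in \bR^d} \norm{e^{t H(\xi, \eta)}}$. It therefore suffices to produce a sequence $\eta_k \to \infty$ with $\norm{T_{\eta_k}}_{L^2 \to L^2} \to \infty$: Banach--Steinhaus then supplies a single $U_0 \in L^2$ with $\sup_k \norm{T_{\eta_k} U_0}_{L^2} = +\infty$, and since $\eta = \eps^{-1}$ this produces the claimed $\limsup$.

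To produce such a sequence, I would exploit the homogeneity $H(c\xi, c\eta) = c H(\xi, \eta)$ for $c > 0$. For $\eta > 0$ this yields
\begin{displaymath}
  \norm{T_\eta}_{L^2 \to L^2} = \sup_\xi \norm{e^{t H(\xi, \eta)}} = \sup_{\xi'} \norm{e^{t \eta H(\xi', 1)}} =: \Psi(t \eta),
\end{displaymath}
where $\Psi(\tau) := \sup_{\xi \in \bR^d} \norm{e^{\tau H(\xi, 1)}}$. The same scaling shows that uniform quasi-stability of the family $\{H(\xi, \eta) : \xi \in \bR^d,\ \eta > 0\}$ is equivalent to that of $\{H(\xi, 1) : \xi \in \bR^d\}$; since strong hyperbolicity takes care of the $\eta = 0$ slice of $\cF_0$, the failure of stiff well-posedness is equivalent to $\{H(\xi, 1) : \xi \in \bR^d\}$ failing uniform quasi-stability, i.e., $\sup_{\tau \geq 0} \Psi(\tau) = +\infty$. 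Hence the proof reduces to realising this blow-up along some sequence $\tau_k \to \infty$.

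This is where the main difficulty lies: a priori $\Psi$ could blow up at some bounded $\tau$, in which case the corresponding $\eta_k = \tau_k / t$ would stay bounded and Banach--Steinhaus would not give a blow-up in the zero relaxation limit. To rule this out, I would use strong hyperbolicity more quantitatively. Writing $H(\xi, 1) = H(\xi, 0) + Q$ and invoking Duhamel's formula,
\begin{displaymath}
  e^{\tau H(\xi, 1)} = e^{\tau H(\xi, 0)} + \int_0^\tau e^{(\tau - s) H(\xi, 0)} \, Q \, e^{s H(\xi, 1)} \, \rd s,
\end{displaymath}
the uniform bound $\norm{e^{s H(\xi, 0)}} \leq C$ for $(s, \xi) \in [0, \infty) \times \bR^d$ from strong hyperbolicity, combined with Gronwall's inequality, yields $\norm{e^{\tau H(\xi, 1)}} \leq C e^{C \norm{Q} \tau}$ uniformly in $\xi$. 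Thus $\Psi(\tau) < \infty$ for every finite $\tau$, so $\sup_{\tau \geq 0} \Psi(\tau) = +\infty$ must be realised along some $\tau_k \to \infty$. Setting $\eta_k := \tau_k / t$ completes the construction, and Banach--Steinhaus closes the argument.
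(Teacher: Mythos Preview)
Your argument is correct. The homogeneity $H(c\xi,c\eta)=cH(\xi,\eta)$ reduces the question to the single-parameter family $\{H(\xi,1)\}_\xi$, the Duhamel--Gronwall step using strong hyperbolicity gives the crucial finiteness of $\Psi(\tau)$ for each fixed $\tau$, and Banach--Steinhaus then produces the required $U_0$. One small point worth making explicit: each $T_{\eta_k}$ is indeed a bounded operator on $L^2$ precisely because of the bound $\Psi(\tau)\le C e^{C\norm{Q}\tau}<\infty$ you derived, so the hypotheses of the uniform boundedness principle are met.

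As for comparison with the paper: the paper does not supply its own proof of this proposition; it merely cites Theorem~2.2 of \cite{Yong_basic_2002}. Your write-up therefore stands as a self-contained argument where the paper offers none. The approach you take---Fourier multiplier norms plus scaling plus Banach--Steinhaus---is the natural one for this kind of statement and is essentially what underlies the cited result as well.
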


\begin{remark}[Slow and fast variables]
  Under the condition of stiff well-posedness, the matrix $Q$ is 
  quasi-stable, and thus, zero is a semi-simple eigenvalue of $Q$, 
  as indicated in the basic assumption in \cref{eq:Yong1_PQPinv}.
  At this point, one can consider the following equation concerning 
  the unknown function $P U$:
  \begin{displaymath}
    \pd{(P U)}{t} + \sum_{j = 1}^d P A_j P^{-1} \cdot \pd{(P U)}{x_j} 
    = \frac{1}{\eps} P Q P^{-1} \cdot P U,
  \end{displaymath}
  and the symmetrizer $A_0$ can be substituted by $P A_0 P^{-1}$.
  Therefore, henceforth, when assuming the quasi-stability of 
  the matrix $Q$, we consider the form of $Q$ to be 
  \begin{displaymath}
    Q = \begin{pmatrix} 0 & 0 \\ 0 & B \end{pmatrix},
  \end{displaymath}
  without loss of generality. 
  Correspondingly, the matrices $A_j$ and $H$, and the vector $U$, 
  can be expressed in block form as follows:
  \begin{displaymath}
    A_j = \begin{pmatrix}
      A_{j,11} & A_{j,12} \\ A_{j,21} & A_{j,22}
    \end{pmatrix}, \quad 
    H = \begin{pmatrix}
      H_{11} & H_{12} \\ H_{21} & H_{22}
    \end{pmatrix}, \quad 
    U = \begin{pmatrix}
      u \\ v
    \end{pmatrix},
  \end{displaymath}
  where $u$ and $v$ represent \textit{slow variables} and 
  \textit{fast variables}, respectively.
\end{remark}

\begin{remark}\label{rmk:yong->SWP}
  The relation between stiff well-posedness and
  \Cref{cond:Yong2,cond:Yong3w} in Yong's first stability condition, 
  as well as the relation between strong hyperbolicity and 
  symmetrizable hyperbolicity, can be elucidated from the perspective 
  of the Kreiss matrix theorem presented in \Cref{thm:KMT}. 
  Let us consider the former as an example.
  According to \Cref{thm:KMT}, stiff well-posedness is equivalent to  
  the existence of a positive constant $K_4$ such that for each 
  $\xi \in \bR^d$ and $\eta \ge 0$, there exists a Hermitian matrix 
  $A_0 = A_0(\xi, \eta)$ such that 
  \begin{equation}\label{eq:SWP_SPD}
    K_4^{-1} I \le A_0(\xi, \eta) \le K_4 I, \quad 
    A_0(\xi, \eta) H(\xi, \eta) + H(\xi, \eta)^* A_0(\xi, \eta) \le 0.
  \end{equation}
  From \Cref{cond:Yong2,cond:Yong3w}, one can deduce the existence of 
  a positive definite Hermitian matrix $A_0$ such that 
  \begin{equation}\label{eq:yong_SPD}
    A_0 H(\xi, \eta) + H(\xi, \eta)^* A_0 \le 0,
  \end{equation}
  for each $\xi \in \bR^d$ and $\eta \ge 0$.
  Conversely, if \cref{eq:yong_SPD} holds for each $\xi \in \bR^d$ 
  and $\eta \ge 0$, \Cref{cond:Yong2,cond:Yong3w} also hold by taking 
  $(\xi, \eta) = (0, 1), (\pm e_j, 0)$ with $j = 1, 2, \ldots, d$.
  Therefore, Yong's first stability condition without the 
  non-oscillation assumption corresponds to the case when the 
  Hermitian matrix $A_0(\xi, \eta)$ in \cref{eq:SWP_SPD} is 
  independent of $\xi$ and $\eta$. 
  In this case, it can be shown that $A_0$ is a block diagonal matrix 
  \cite[Lemma 2.1]{Yong_singular_1999}, leading effortlessly to the 
  uniform quasi-stability of the following two matrix sets:
  \begin{gather}
    \label{eq:F1}
    \cF_1 := \left\{ H_{11}(\xi, 0) \,\big|\, \xi \in \bR^d \right\},
    \\
    \label{eq:F2}
    \cF_2 := \left\{ H_{22}(\xi, \eta) \,\big|\, 
    \xi \in \bR^d, \eta \ge 0 \right\},
  \end{gather}
  where the former aligns with the conclusion in 
  \Cref{lemma:UQS_of_F1},
  ensuring the strong hyperbolicity of reduced systems presented in 
  \cref{eq:reduced}, and the latter facilitates the estimation of
  a crucial term defined in \cref{eq:key_term} in the proof of our 
  main result. 
  However, it is important to note that 
  the only assumption for \cref{eq} in this paper to 
  establish asymptotic convergence of slow variables is stiff 
  well-posedness, demanding additional efforts for the proof.
\end{remark}

We have shown in \Cref{rmk:yong->noa,rmk:yong->SWP}
that Yong's first stability condition implies the 
\textit{Lorenz--Schroll condition} \cite{lorenz_stiff_1997}, which 
comprises the non-oscillation assumption and the stiff 
well-posedness condition. J. Lorenz and H. J. Schroll established 
the equivalence, in some sense, between the convergence of the 
solutions in the zero relaxation limit and the Lorenz--Schroll 
condition \cite{lorenz_stiff_1997}.

There are two essentially different asymptotic behaviors of 
the solutions to \cref{eq}, depending 
on whether $B$ has non-zero purely imaginary eigenvalues,
corresponding to rapid oscillation and dissipation of fast 
variables.
The works mentioned earlier in this section considered 
the simpler case when the non-oscillation assumption holds, 
as pointed out in \cite{Yong_basic_2002}. 
This paper aims to eliminate the non-oscillation 
assumption and investigate the asymptotic behavior of \cref{eq} 
subject to the only condition of stiff well-posedness, 
the weakest requirement that can be posed on \cref{eq}.
Similar to the results in 
\cite{lorenz_stiff_1997,Yong_singular_1999}, 
this work aims to prove the asymptotic convergence of slow 
variables toward the solution to the following reduced system, 
\begin{equation}\label{eq:reduced}
  \pd{\breve{u}}{t} + \sum_{j = 1}^{d} A_{j,11} \pd{\breve{u}}{x_j} = 
  0, \quad \breve{u}(0, x) = u_0(x).
\end{equation}

The main result of this paper is presented as follows.

\begin{theorem}\label{thm}
  If \cref{eq} is stiffly well-posed, then the reduced system 
  \eqref{eq:reduced} is strongly hyperbolic, and the following 
  convergence holds for each $t > 0$ and $U_0 \in L^2$:
  \begin{displaymath}
    \lim \limits_{\eps \to 0} 
    \| u(t, \cdot, \eps^{-1}) - \breve{u}(t, \cdot) 
    \|_{L^2} = 0,
  \end{displaymath}
  where $\breve{u}$ is determined by \cref{eq:reduced}.
  Furthermore, if the matrix set $\cF_2$ defined in \cref{eq:F2}
  is uniformly quasi-stable,
  then the following error estimate holds for each $t > 0$, 
  $U_0 \in H^2$ and sufficiently small $\eps$:
  \begin{displaymath}
    \| u(t, \cdot, \eps^{-1}) - \breve{u}(t, \cdot) 
    \|_{L^2} 
    \le C e^t \eps |\log \eps| \cdot \|U_0\|_{H^2},
  \end{displaymath}
  where $C$ is a constant dependent on the matrices $A^j$ and $Q$.
\end{theorem}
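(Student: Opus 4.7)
The plan is to diagonalise the spatial operator via Fourier transform and apply a Mori--Zwanzig decoupling, which, for the block decomposition $U=(u,v)^\top$, converts \cref{eq} into a Volterra-type integro-differential equation for the slow Fourier mode:
\begin{equation*}
\partial_t \hat u = -i\xi\!\cdot\!A_{11}\,\hat u \;-\; i\xi\!\cdot\!A_{12}\,e^{tH_{22}(\xi,\eta)}\hat v_0 \;-\; (i\xi\!\cdot\!A_{12})\int_0^t e^{(t-s)H_{22}(\xi,\eta)}(i\xi\!\cdot\!A_{21})\hat u(s)\,\mathrm{d}s.
\end{equation*}
Formally the last two terms should vanish as $\eps\to 0$, leaving $\partial_t \hat u = -i\xi\!\cdot\!A_{11}\hat u$. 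The task is to quantify this in $L^2_\xi$.

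First I would establish strong hyperbolicity of the reduced system. Applying the Schur complement to $(zI-H(\xi,\eta))^{-1}$, its $(1,1)$ block equals $[(zI+i\xi\!\cdot\!A_{11})+(i\xi\!\cdot\!A_{12})(zI-H_{22})^{-1}(i\xi\!\cdot\!A_{21})]^{-1}$. As $\eta\to\infty$ the correction term vanishes because $(zI-H_{22})^{-1}$ carries the dominating $\eta B$; a Picard iteration in an appropriate Banach space of matrix-valued holomorphic functions of $z$ realises $(zI+i\xi\!\cdot\!A_{11})^{-1}$ as the fixed-point limit, with the contraction estimate supplied by the improved resolvent bound from \Cref{thm:IKMT} applied to the uniformly quasi-stable family $\cF_0$. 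Transferring the uniform bound on $(zI+i\xi\!\cdot\!A_{11})^{-1}$ back through \Cref{thm:IKMT} yields uniform quasi-stability of $\cF_1$, i.e.\ strong hyperbolicity of \cref{eq:reduced}.

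Next I would split frequencies at a cutoff $R=R(\eps)$. On the high-frequency region $|\xi|>R$, stiff well-posedness controls $\|\hat u(t,\xi,\eps^{-1})\|\le K\|\hat U_0(\xi)\|$ and the strong hyperbolicity just established controls $\|\hat{\breve u}(t,\xi)\|\le K'\|\hat u_0(\xi)\|$, so the $L^2$ error on this region is dominated by the $L^2$ tail of $\hat U_0$: for $U_0\in L^2$ this tail vanishes by dominated convergence as $R\to\infty$, while for $U_0\in H^2$ it decays as $R^{-2}\|U_0\|_{H^2}$. On the low-frequency region $|\xi|\le R$, I would estimate the initial-data and memory terms by a generalised Riemann--Lebesgue lemma for matrix exponentials, obtained by rewriting $e^{tH_{22}}$ via the inverse Laplace transform and bounding the resulting vertical contour integral with the sharp resolvent bound from \Cref{thm:IKMT}. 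The critical gain is the $|\mathrm{Im}\,z|^{-1}$ decay encoded in \Cref{thm:IKMT}, absent from the canonical Kreiss bound, which provides integrability and produces a factor $\eps$ times polynomial growth in $|\xi|$. Balancing against the high-frequency tail by optimising $R(\eps)$ produces the $\eps|\log\eps|$ rate under the additional assumption on $\cF_2$, with the logarithm arising from integrating $|y|^{-1}$ over a bounded vertical segment.

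The principal obstacle is the purely oscillatory regime of $H_{22}$: without dissipation, energy methods yield no decay of the memory kernel, and the classical bound $\|(zI-H_{22})^{-1}\|\le K/\Re z$ is too crude to supply integrability along vertical contours. The entire argument hinges on replacing it with the sharper estimate of \Cref{thm:IKMT}, which captures the additional $|\mathrm{Im}\,z|^{-1}$ decay and reduces the convolution $\int_0^t e^{(t-s)H_{22}}(\,\cdot\,)\hat u(s)\,\mathrm{d}s$ to order $O(\eps|\xi|^2)$ up to logarithmic factors. A secondary subtlety is the Picard construction for strong hyperbolicity of \cref{eq:reduced}: one must close the fixed-point argument without any a priori information on the spectrum of $-i\xi\!\cdot\!A_{11}$, which forces careful bookkeeping of the spectrum-localised resolvent bound in \Cref{thm:IKMT}. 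The Gr\"onwall constant from the Volterra equation finally absorbs into the $e^t$ prefactor in the error estimate.
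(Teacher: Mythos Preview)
Your overall architecture matches the paper's: Mori--Zwanzig decoupling in Fourier, frequency splitting, a generalised Riemann--Lebesgue lemma via Laplace inversion and \Cref{thm:IKMT} for the low-frequency region, and a tail estimate driven by strong hyperbolicity of the reduced system for the high-frequency region. The one substantive difference is how you obtain strong hyperbolicity of \cref{eq:reduced}.

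The paper proves \Cref{lemma:UQS_of_F1} in the \emph{time domain}: for fixed $\xi$ and $T$ it sets up a Picard iteration for the pair $(\tilde u,\tilde v)$ in $C^1([0,T])\times C^0([0,T])$, and the contraction estimate comes from the generalised Riemann--Lebesgue \Cref{lemma:gRL} applied to $\int_0^t e^{H_{22}(t-s)}H_{21}\tilde u(s)\,\rd s$. Your route via the Schur complement of $(zI-H)^{-1}$ is different and, in fact, can be made to work more simply than you describe. No Picard iteration is needed: for fixed $z\in\bH$ and $\xi$, the block formula
\[
  [(zI-H(\xi,\eta))^{-1}]_{11}
  =\bigl[(zI-H_{11}(\xi,0))-H_{12}(zI-H_{22}(\xi,\eta))^{-1}H_{21}\bigr]^{-1}
\]
holds for all sufficiently large $\eta$ (since $B$ is invertible, $zI-H_{22}(\xi,\eta)=-\eta\bigl(B-\eta^{-1}(zI+i\xi\!\cdot\!A_{22})\bigr)$ is eventually invertible and its inverse tends to $0$). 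Stiff well-posedness and the \emph{standard} Kreiss resolvent bound give $\|[(zI-H)^{-1}]_{11}\|\le K_2/\Re z$ uniformly in $\eta$, hence the smallest singular value of $(zI-H_{11})-P(\eta)$ is bounded below by $\Re z/K_2$; letting $\eta\to\infty$ pointwise yields $\|(zI-H_{11}(\xi,0))^{-1}\|\le K_2/\Re z$ for every $z\in\bH$ and $\xi\in\bR^d$, which is exactly Kreiss's resolvent condition for $\cF_1$. This bypasses both the time-domain fixed point and the improved estimate of \Cref{thm:IKMT} for this step. So your idea is sound---indeed more elementary than the paper's argument here---but the description as a ``Picard iteration in a Banach space of matrix-valued holomorphic functions of $z$'' with a contraction from \Cref{thm:IKMT} is misleading: the perturbation $H_{12}(zI-H_{22})^{-1}H_{21}$ does \emph{not} vanish uniformly over $z\in\bH$ as $\eta\to\infty$ (resonances $z\approx\eta\lambda$ for purely imaginary $\lambda\in\sigma(B)$ prevent that), so a genuine contraction on such a space would fail. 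What saves the argument is that a pointwise limit suffices, because the resolvent bound $K_2/\Re z$ is already uniform in $\eta$.

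Two small corrections. First, ``transferring the uniform bound on $(zI+i\xi\!\cdot\!A_{11})^{-1}$ back through \Cref{thm:IKMT}'' should invoke the classical Kreiss matrix theorem (\Cref{thm:KMT}), not its sharpening; \Cref{thm:IKMT} is only essential in the proof of \Cref{lemma:gRL}. Second, the $e^t$ prefactor in the error estimate does not come from a Gr\"onwall argument on the Volterra equation but from fixing the real part of the Laplace contour at $\tilde\gamma=1$ in the second conclusion of \Cref{lemma:gRL}.
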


\begin{remark}
  As indicated by \Cref{prop:not_SWP}, stiff well-posedness is 
  a necessary stability criterion for \cref{eq} to have a correctly 
  behaved asymptotic limit \cite{Yong_basic_2002}.
  \Cref{thm} shows that this necessary criterion is sufficient to 
  imply the asymptotic convergence of slow variables.
\end{remark}

\begin{remark}
  As mentioned previously, dealing with high oscillations constitutes 
  a challenging aspect of the problem. 
  For multiscale problems with oscillatory or stochastic terms, one 
  would typically expect convergence in the sense of weak 
  convergence \cite{Fanelli2019Asymtptocs}, time averaging, or 
  ensemble averaging \cite{Pavliotis2008Multiscale}. 
  However, as demonstrated by the proof of \Cref{thm}, our findings 
  reveal uniform convergence within a finite time $t$, thereby 
  confirming the asymptotic convergence of slow variables over a 
  finite time interval in the strong $L^\infty_t L^2_x$ sense under 
  the unimprovable weakest stability condition. 
  Notably, the solution to the reduced system 
  \eqref{eq:reduced} is free of oscillations. 
  Consequently, our results indicate that the oscillations in the 
  slow variables, both in time and in space, are suppressed.
\end{remark}

\begin{remark}
  The observed strong convergence inspires investigating  
  systems characterized by high oscillation or fluctuation, as 
  exemplified in molecular dynamics simulations. 
  In contrast to the typically expected weak convergence, where 
  physical quantities are accurate only in a statistical or average 
  sense, strong convergence implies near-universal correctness 
  pointwise. 
  This insight enlightens the design and analysis of multiscale 
  methods for oscillatory systems, providing a novel perspective and 
  potential avenues for advancement.
\end{remark}

\begin{remark}
  Hyperbolic relaxation approximations 
  \cite{JinXin1995relaxation,Natalini1996Convergence}, in our 
  context, utilize hyperbolic relaxation systems \eqref{eq} 
  to approximate reduced systems \eqref{eq:reduced} by introducing a 
  small parameter $\eps$.
  This idea parallels finite difference methods in numerical 
  approximations, where convergence toward the solutions to 
  \cref{eq:reduced} is considered as the mesh size approaches 
  zero. In numerical analysis, the Lax equivalence theorem 
  \cite{Lax1956Survey} is fundamental, stating that a consistent 
  linear finite difference method for a well-posed linear initial 
  value problem is convergent if it is stable.
  As highlighted in \Cref{prop:not_SWP}, stiff well-posedness is the 
  most natural stability condition for hyperbolic relaxation 
  approximations. 
  From this perspective, \Cref{thm} establishes the Lax equivalence 
  theorem for hyperbolic relaxation approximations.
\end{remark}

\begin{remark}
  To obtain the error estimate in \Cref{thm}, we require uniform 
  quasi-stability of the matrix set $\cF_2$, which can be guaranteed 
  by Yong's first stability condition without the non-oscillation 
  assumption as discussed in \Cref{rmk:yong->SWP}.
\end{remark}

\section{Proof of \Cref{thm}}
\label{sec:proof}

In this section, we present the proof of our main result.

\subsection{Basic ideas in the proof}
To enhance the clarity and comprehension of the proof of \Cref{thm}, 
let us begin by introducing some basic ideas within the proof. 

One crucial technique used in the proof is the Mori--Zwanzig technique
\cite{E2011Principles,mori1965transport,zwanzig1960collision} 
to decouple fast and slow variables. 
This technique is a general strategy to 
integrate out a subset of variables in a problem.
For \cref{eq}, this strategy can be employed as follows.
Applying the Fourier transform on both sides of \eqref{eq} leads to 
the following system:
\begin{displaymath}
  \od{\hat{U}}{t} = H \hat{U}, \quad 
  \hat{U}(0, \xi, \eta) = \hat{U}_0(\xi),
\end{displaymath}
that is, 
\begin{gather*}
  \od{\hat{u}}{t} = H_{11} \hat{u} + H_{12} \hat{v}, \quad 
  \hat{u}(0, \xi, \eta) = \hat{u}_0(\xi), \\
  \od{\hat{v}}{t} = H_{21} \hat{u} + H_{22} \hat{v}, \quad
  \hat{v}(0, \xi, \eta) = \hat{v}_0(\xi).
\end{gather*}
Applying Duhamel's principle, one obtains that 
\begin{gather}
  \label{eq:duhamel_u}
  \hat{u}(t, \xi, \eta) = \exp(H_{11} t) \hat{u}_0(\xi) + \int_0^t 
  \exp(H_{11} (t - s)) H_{12} \hat{v}(s, \xi, \eta) \,\rd s, \\
  \label{eq:duhamel_v}
  \hat{v}(t, \xi, \eta) = \exp(H_{22} t) \hat{v}_0(\xi) + \int_0^t 
  \exp(H_{22} (t - s)) H_{21} \hat{u}(s, \xi, \eta) \,\rd s, 
\end{gather}
which yields that 
\begin{multline}\label{eq:MZ}
  \hat{u}(t, \xi, \eta) = \exp(H_{11} t) \hat{u}_0(\xi) \\ + 
  G(t, \xi, \eta) \hat{v}_0(\xi) + 
  \int_{0}^{t} G(t - s, \xi, \eta) H_{21} \hat{u}(s, \xi, \eta) 
  \,\rd s,
\end{multline}
where 
\begin{equation}\label{eq:key_term}
  G(t, \xi, \eta) := \int_{0}^{t} \exp ( H_{11} (t - s) )
  H_{12} \exp(H_{22} s) \,\rd s.
\end{equation}

Note that the first term on the right-hand side of \cref{eq:MZ} is 
the Fourier transform of the solution $\breve{u}$ to 
\cref{eq:reduced}.
Therefore, we aim to demonstrate the $L^2$ norm of the sum of the 
second and third terms tends to zero as $\eta$ tends to infinity.
The expressions of these terms highlight the importance of 
$G(t, \xi, \eta)$.
At first glance, it suffices to establish that $G(t, \xi, \eta)$ 
vanishes in some sense in the asymptotic limit.

Due to the stiff well-posedness of \cref{eq}, the matrix $B$ is 
quasi-stable.
Recall that the matrix $H_{22}$ is defined as $H_{22} = 
H_{22}(\xi, \eta) = \eta B - i \sum_{j = 1}^n \xi_j A_{j, 22}$.
The asymptotic behavior of $G(t, \xi, \eta)$ is somewhat analogous to 
the limit process in the Riemann--Lebesgue lemma. 
This observation involves replacing the scalar term that exhibits 
high-frequency 
oscillation in the canonical Riemann--Lebesgue lemma with a matrix 
exponential $\exp(H_{22}(\xi, \eta) t)$ that may exhibit 
high-frequency oscillation or rapid dissipation in the variable $t$ 
when $\eta$ is large.
However, handling the matrix exponential in this form is challenging 
due to the non-commutative nature of the matrices $B$ and $A_{j, 22}$.

By Laplace transform, we reduce this problem to resolvent estimates.
Utilizing the improved resolvent estimates in \Cref{thm:IKMT} and 
estimates of an elementary integral in \Cref{lemma:integral}, 
we derive the generalized Riemann--Lebesgue lemma for matrix 
exponentials in \Cref{lemma:gRL}.
According to this lemma, a uniform asymptotic convergence estimate of 
$G(t, \xi, \eta)$ is obtained in \Cref{lemma:low_frequency_G},
specifically at the low-frequency region, i.e.,
when the frequency $\xi$ is bounded.
At this time, the low-frequency estimate of the error is 
summarized in \Cref{lemma:low_frequency}.

When it comes to estimating the high-frequency error, the 
problem is more difficult. 
As mentioned in \Cref{rmk:yong->SWP}, if \cref{eq} satisfies Yong's 
first stability condition without the non-oscillation assumption, the 
two sets of matrices, $\cF_1$ and $\cF_2$, are uniformly  
quasi-stable, which yields at least that $G(t, \xi, \eta)$ is 
bounded by $|\xi|$ up to multiplication by a constant at the 
high-frequency region. 
At this time, the error at the high-frequency region can be 
controlled by taking a sufficiently large frequency truncation for 
the given initial value. 
However, when \cref{eq} only satisfies the condition of stiff 
well-posedness, one does not even know whether $G(t, \xi, \eta)$ is 
bounded, let alone control the high-frequency error.

We solve this problem indirectly, by first establishing the 
uniform quasi-stability (UQS) of the matrix set $\cF_1$, which is 
just the first conclusion of \Cref{thm}.
We aim to prove that the matrix exponential $\exp(H_{11} t)$ is 
uniformly bounded.
While \cref{eq:duhamel_u,eq:duhamel_v}, in accordance with stiff 
well-posedness, can be controlled uniformly for $t \ge 0$, 
$\xi \in \bR^d$, and $\eta \ge 0$, these alone are insufficient to 
guarantee the uniform boundedness of the integral term of 
\cref{eq:duhamel_u}.
It is important to note that the first right-hand term of 
\cref{eq:duhamel_u} is independent of $\eta$. 
The key in the proof is to examine the solution with zero initial 
value of fast variables and to show the integral term of 
\cref{eq:duhamel_u} tends to zero for fixed $t \ge 0$ and 
$\xi \in \bR^d$ as $\eta$ tends to infinity.

The asymptotic behavior of the integral term of \cref{eq:duhamel_v} 
is similar to our generalized Riemann--Lebesgue lemma in 
\Cref{lemma:gRL}. 
However, the integrand $\hat{u}$ depends on $\hat{v}$ according to 
\cref{eq:duhamel_u}, which brings additional challenges. 
Therefore, we utilize \cref{eq:duhamel_u,eq:duhamel_v} to construct 
a Picard sequence. By carefully choosing the Banach space and its 
subset being considered, we can prove that the iteration formula
producing the Picard sequence is determined by a contraction mapping,
which allows us to utilize the Banach fixed-point theorem to 
approximate the solution and to estimate it. 
In this way, one can obtain the uniform quasi-stability of the matrix 
set $\cF_1$ as summarized in \Cref{lemma:UQS_of_F1}, which estimates 
the error at the high-frequency region in 
\Cref{lemma:high_frequency}. 
Together with \Cref{lemma:low_frequency}, one can complete the proof 
of \Cref{thm}.

Let us provide a visual flowchart in \Cref{fig} to summarize the 
previous discussions and illustrate the main logical progression 
in our proof.

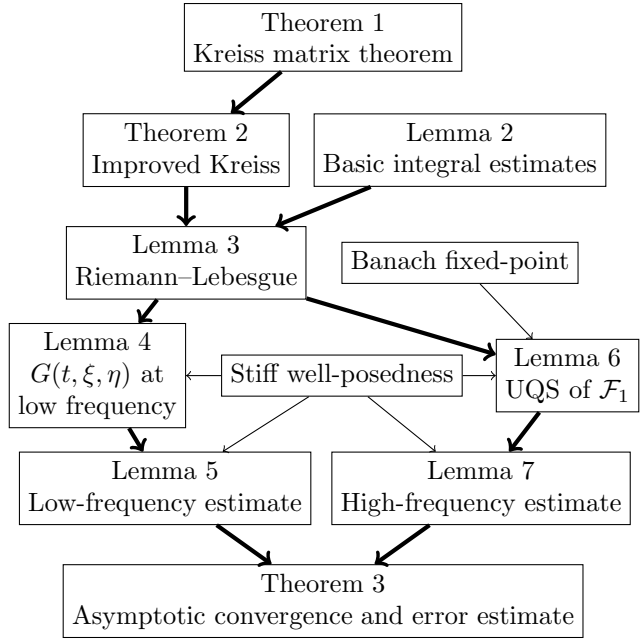
\begin{figure}[h]
  \centering
  \begin{tikzpicture}
    \node (KMT) at (0, 7.5) 
      [draw, rectangle, align=center] 
      {\Cref{thm:KMT} \\ Kreiss matrix theorem};
    \node (BI) at (1.82, 6) 
      [draw, rectangle, align=center] 
      {\Cref{lemma:integral} \\ Basic integral estimates};
    \node (IKMT) at (-1.82, 6) 
      [draw, rectangle, align=center] 
      {\Cref{thm:IKMT} \\ Improved Kreiss};
    \node (gRL) at (-1.82, 4.5) 
      [draw, rectangle, align=center] 
      {\Cref{lemma:gRL} \\ Riemann--Lebesgue};
    \node (banach) at (1.82, 4.5) 
      [draw, rectangle, align=center] 
      {Banach fixed-point};
    \node (low_frequency_G) at (-3, 3) 
      [draw, rectangle, align=center] 
      {\Cref{lemma:low_frequency_G} \\ $G(t, \xi, \eta)$ at \\ 
      low frequency};
    \node (UQS_of_F1) at (3.25, 3) 
      [draw, rectangle, align=center] 
      {\Cref{lemma:UQS_of_F1} \\ UQS of $\cF_1$};
    \node (stiff_well_posedness) at (0.25, 3) 
      [draw, rectangle] 
      {Stiff well-posedness};
    \node (low_frequency) at (-2.1, 1.5) 
      [draw, rectangle, align=center] 
      {\Cref{lemma:low_frequency} \\ Low-frequency estimate};
    \node (high_frequency) at (2.1, 1.5) 
      [draw, rectangle, align=center] 
      {\Cref{lemma:high_frequency} \\ High-frequency estimate};
    \node (thm) at (0, 0) 
      [draw, rectangle, align=center] 
      {\Cref{thm} \\ Asymptotic convergence and error estimate};
    
    \draw[->,ultra thick] (KMT) -- (IKMT);
    \draw[->,ultra thick] (IKMT) -- (gRL);
    \draw[->,ultra thick] (BI) -- (gRL);
    \draw[->,ultra thick] (gRL) -- (low_frequency_G);
    \draw[->,ultra thick] (gRL) -- (UQS_of_F1);
    \draw[->] (banach) -- (UQS_of_F1);
    \draw[->] (stiff_well_posedness) -- (low_frequency_G);
    \draw[->] (stiff_well_posedness) -- (UQS_of_F1);
    \draw[->,ultra thick] (UQS_of_F1) -- (high_frequency);
    \draw[->,ultra thick] (low_frequency_G) -- (low_frequency);
    \draw[->] (stiff_well_posedness) -- (low_frequency);
    \draw[->] (stiff_well_posedness) -- (high_frequency);
    \draw[->,ultra thick] (low_frequency) -- (thm);
    \draw[->,ultra thick] (high_frequency) -- (thm);
    \end{tikzpicture}
  \caption{Flowchart of the proof}
  \label{fig}
\end{figure}

\subsection{Basic estimates of an elementary integral}
Let us begin with basic estimates of the following integral:
\begin{equation}\label{equ:integral}
  \cI(\alpha_1, \alpha_2, \beta_1, \beta_2) := 
  \int_{- \infty}^{+ \infty} \frac{1}{|y - \alpha_1| + \beta_1} 
  \cdot \frac{1}{|y - \alpha_2| + \beta_2} \,\rd y.
\end{equation}
Here, $\alpha_j \in \bR$ and $\beta_j > 0$ for $j = 1, 2$.
Integrals of this form will be used several times in our proof in 
conjunction with \Cref{thm:IKMT}. While this integral has an explicit 
analytical expression, it is complicated when 
$\beta_1 \ne \beta_2$. Therefore, let us give some estimates.

\begin{lemma}\label{lemma:integral}
  The following estimates of the integral defined in 
  \cref{equ:integral} hold:
  \begin{enumerate}
    \item If $\beta_1 = \beta_2 = \beta$, then 
    \begin{displaymath}
      \cI(\alpha_1, \alpha_2, \beta, \beta) = 
      \begin{cases}
        \frac{4 (d + 1) \log(d + 1)}{\beta d (d + 2)} 
        < \frac{2}{\beta}, & \quad \text{ if } d > 0, \\
        \frac{2}{\beta}, & \quad \text{ if } d = 0,
      \end{cases}
    \end{displaymath}
    where $d = \frac{|\alpha_1 - \alpha_2|}{\beta} \ge 0$;
    \item If $\beta_1 \ne \beta_2$, then 
    \begin{displaymath}
      \cI(\alpha_1, \alpha_2, \beta_1, \beta_2) \le 
      2 \frac{\log \beta_1 - \log \beta_2}{\beta_1 - \beta_2} <
      \frac{2}{\min \left\{ \beta_1, \beta_2 \right\}}.
    \end{displaymath}
  \end{enumerate}
\end{lemma}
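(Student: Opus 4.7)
The plan is to dispatch the two cases by elementary methods, reserving a short rearrangement argument for the asymmetric case in order to bypass an otherwise messy piecewise computation.

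For Case~1 ($\beta_1 = \beta_2 = \beta$), I would first translate the integration variable so that the two centres become $-a$ and $a$ with $a = |\alpha_1 - \alpha_2|/2 = d\beta/2$. The real line then splits naturally into the three intervals $(-\infty, -a]$, $[-a, a]$, $[a, +\infty)$, on each of which the absolute values unfold and the integrand becomes a rational function in $y$ that decomposes by standard partial fractions. The two outer pieces each contribute $\tfrac{1}{2a}\log\tfrac{2a+\beta}{\beta}$ and the middle piece yields $\tfrac{1}{a+\beta}\log\tfrac{2a+\beta}{\beta}$, so that the sum simplifies to $\log(d+1)\bigl(\tfrac{1}{a}+\tfrac{1}{a+\beta}\bigr)$; resubstituting $a = d\beta/2$ gives exactly $\tfrac{4(d+1)\log(d+1)}{\beta d(d+2)}$, and the degenerate case $d = 0$ is a one-line computation producing $2/\beta$. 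The strict bound $<2/\beta$ for $d > 0$ reduces, via $t := d+1 > 1$, to $2t \log t < t^2 - 1$: this follows because $f(t) := t^2 - 1 - 2t \log t$ satisfies $f(1) = f'(1) = 0$ and $f''(t) = 2 - 2/t > 0$ on $(1,+\infty)$, so $f$ is strictly positive there.

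For Case~2 ($\beta_1 \ne \beta_2$), the cleanest route is to observe that each factor $\tfrac{1}{|y-\alpha_j|+\beta_j}$ is symmetric decreasing about $\alpha_j$ and vanishes at infinity, so its symmetric decreasing rearrangement about the origin is $\tfrac{1}{|y|+\beta_j}$. The Hardy--Littlewood rearrangement inequality then gives
\begin{displaymath}
  \cI(\alpha_1, \alpha_2, \beta_1, \beta_2) \le \cI(0, 0, \beta_1, \beta_2) = 2 \int_{0}^{+\infty} \frac{\rd y}{(y+\beta_1)(y+\beta_2)},
\end{displaymath}
and a one-line partial fraction decomposition evaluates the remaining integral to $\tfrac{\log \beta_1 - \log \beta_2}{\beta_1 - \beta_2}$, which when doubled is precisely the first bound in Case~2. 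The final comparison with $\tfrac{2}{\min\{\beta_1,\beta_2\}}$ then follows by applying the mean value theorem to $\log$ on the interval joining $\beta_1$ and $\beta_2$: one obtains $\tfrac{\log\beta_1-\log\beta_2}{\beta_1-\beta_2} = 1/\xi$ for some $\xi$ strictly between $\beta_1$ and $\beta_2$, and $\xi > \min\{\beta_1,\beta_2\}$ since $\beta_1 \neq \beta_2$.

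The only real obstacle would arise if one insisted on avoiding Hardy--Littlewood and computing Case~2 directly: the same three-interval splitting produces a sum of three logarithmic terms depending on $c = \alpha_2 - \alpha_1$, and one would then have to verify that this expression is maximised at $c = 0$ (and decreases as $|c|$ grows). The rearrangement inequality collapses this analysis into a single line, which is where the real technical economy lies; everything else is partial fractions and one-variable calculus.
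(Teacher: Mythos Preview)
Your proof is correct. For Case~1 both you and the paper proceed by direct computation; the paper is terser (it merely asserts the closed form follows ``by direct calculation'' and justifies the strict bound by noting that $d\mapsto \tfrac{4(d+1)\log(d+1)}{d(d+2)}$ is decreasing with limit $2$ as $d\to 0^+$), while your explicit three-interval splitting and the calculus check $2t\log t<t^2-1$ amount to the same argument written out in full.

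The genuinely different step is Case~2. You reach the key reduction $\cI(\alpha_1,\alpha_2,\beta_1,\beta_2)\le\cI(0,0,\beta_1,\beta_2)$ by invoking the Hardy--Littlewood rearrangement inequality. The paper obtains the same reduction by an elementary trick you did not anticipate: the two differences
\[
\frac{1}{|y-\alpha_1|+\beta_j}-\frac{1}{|y-\alpha_2|+\beta_j},\qquad j=1,2,
\]
have the same sign pointwise (each is positive exactly when $y$ is closer to $\alpha_1$ than to $\alpha_2$), so the integral of their product is nonnegative; combining this with the reflection $y\mapsto\alpha_1+\alpha_2-y$ to identify the cross terms gives $\cI(\alpha_1,\alpha_2,\beta_1,\beta_2)\le\cI(\alpha_1,\alpha_1,\beta_1,\beta_2)$ directly. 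Your route is conceptually transparent and instantly recognisable, at the cost of citing an external inequality; the paper's argument is entirely self-contained and sidesteps both Hardy--Littlewood \emph{and} the ``messy piecewise'' maximisation in $c$ that you flag as the only alternative. After the reduction, both proofs finish with the same one-line partial-fraction integral and the mean-value bound.
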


\begin{proof}
  The first conclusion is straightforward by direct calculation and 
  by noticing that the function 
  $f(d) := \frac{4 (d + 1) \log (d + 1)}{d (d + 2)}$ is strictly 
  decreasing on $(0, + \infty)$, and it has the limit 
  $\lim_{d \rightarrow 0} f(d) = 2$.

  As for the second conclusion, we notice that 
  \begin{displaymath}
    \int_{- \infty}^{+ \infty} 
    \left( \frac{1}{|y - \alpha_1| + \beta_1} 
    - \frac{1}{|y - \alpha_2| + \beta_1} \right) \cdot 
    \left( \frac{1}{|y - \alpha_1| + \beta_2} 
    - \frac{1}{|y - \alpha_2| + \beta_2} \right) \,\rd y \ge 0,
  \end{displaymath}
  and that 
  \begin{displaymath}
    \begin{aligned}
      & \int_{- \infty}^{+ \infty} \frac{1}{|y - \alpha_1| + \beta_1} 
      \left( \frac{1}{|y - \alpha_1| + \beta_2} - 
      \frac{1}{|y - \alpha_2| + \beta_2} \right) \,\rd y \\
      = & \int_{- \infty}^{+ \infty}
      \frac{1}{|y - \alpha_2| + \beta_1} 
      \left( \frac{1}{|y - \alpha_2| + \beta_2} - 
      \frac{1}{|y - \alpha_1| + \beta_2} \right) \,\rd y.
    \end{aligned}
  \end{displaymath}
  Therefore, 
  \begin{displaymath}
    \cI(\alpha_1, \alpha_2, \beta_1, \beta_2) \le 
    \cI(\alpha_1, \alpha_1, \beta_1, \beta_2) = 
    2 \frac{\log \beta_1 - \log \beta_2}{\beta_1 - \beta_2} <
    \frac{2}{\min \left\{ \beta_1, \beta_2 \right\}},
  \end{displaymath}
  by direct calculation.
\end{proof}

\subsection{Generalized Riemann--Lebesgue lemma}
Before delving into the exploration of several nontrivial corollaries 
of stiff well-posedness, we need a somehow generalization of 
Riemann--Lebesgue lemma, 
which is an application of \Cref{thm:IKMT}. 
The integral in this version of the Riemann--Lebesgue type estimate 
contains a matrix exponential, which, intuitively speaking, allows 
both oscillation and dissipation.

\begin{lemma}\label{lemma:gRL}
  Assume that the $r \times r$ matrix $B$ is invertible and 
  quasi-stable, the $r \times r$ matrix $M$ is arbitrary,
  and the function $f \in C^1([0, T])$ for a fixed $T > 0$.
  The following estimate holds for 
  $\eta \ge 6 \gamma \delta^{-1}$:
  \begin{displaymath}
    \bigg\| \int_0^T \exp \big( (\eta B + M) (T - s) \big) f(s) 
    \,\rd s \bigg\| \le \frac{32 \sqrt{2} r K e^{\gamma T}}
    {\pi \eta \delta} \|f\|_{C^1([0, T])} \log 
    \frac{\eta \delta + \gamma}{\gamma},
  \end{displaymath}
  where $K$ and $\delta$ depend on the matrix $B$, and 
  $\gamma = 2 K \norm{M} + 1$.
  Furthermore, if the matrix $M$ and the function 
  $f \in C^1([0, +\infty))$ take the form of 
  \begin{displaymath}
    M = M(\theta) = \sum_{j = 1}^d \theta_j M_j, \quad 
    f = f(\cdot; \theta),
  \end{displaymath}
  respectively, where $\theta \in \bR^d$, the matrices $M_j$ with 
  $j = 1, 2, \ldots, d$ are chosen such that the set of matrices,
  \begin{displaymath}
    \cF = \left\{ \eta B + M(\theta) \,\big|\, 
    \theta \in \bR^d, \eta \ge 0 \right\},
  \end{displaymath}
  is uniformly quasi-stable, 
  and the function $f(\cdot; \theta)$ satisfies 
  \begin{displaymath}
    f(\cdot; \omega \theta) = f(\omega \cdot; \theta),
  \end{displaymath}
  for each $\omega > 0$,
  then the following estimate holds for $\eta >\beta (|\theta| + 1)$:
  \begin{multline*}
    \bigg\| \int_0^T \exp \big( (\eta B + M(\theta)) (T - s) \big) 
    f(s; \theta) \,\rd s \bigg\| \\ \le
    \frac{16 \sqrt{2} r \tilde{K} e^T}{\pi \eta \tilde{\delta}} 
    \sup_{|\theta| = 1} \|f(\cdot; \theta)\|_{C^1([0, T])} 
    \log(\eta \tilde{\delta} + 1), 
  \end{multline*}
  where $\tilde{K} > 0$ and $\beta > 0$ depend on the matrix 
  set $\cF$, and $\tilde{\delta} > 0$ depends on the matrix $B$.
\end{lemma}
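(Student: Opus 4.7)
The plan is to reduce the integral to a contour integral of resolvents via the inverse Laplace transform, and then combine the improved Kreiss matrix theorem (\Cref{thm:IKMT}) with the elementary integral estimates (\Cref{lemma:integral}) to close the estimate.

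Setting $A := \eta B + M$, the Laplace inversion formula for matrix exponentials yields
\begin{equation*}
\int_0^T e^{A(T-s)} f(s)\,\rd s = \frac{1}{2\pi i}\int_\Gamma (zI - A)^{-1} F(z)\,\rd z, \quad F(z) := \int_0^T e^{z(T-s)} f(s)\,\rd s,
\end{equation*}
where $F$ is entire and $\Gamma$ is the vertical contour $\Re(z) = \gamma$. One integration by parts in $F$, together with $|z| \ge (\gamma + |y|)/\sqrt{2}$ for $z = \gamma + iy$, produces the bound $|F(z)| \le C e^{\gamma T} \|f\|_{C^1([0,T])}/(\gamma + |y|)$. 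For the resolvent, since $B$ is quasi-stable, the family $\{\eta B : \eta \ge 0\}$ is uniformly quasi-stable with constant $K$, so \Cref{thm:IKMT} gives $\|(zI - \eta B)^{-1}\| \le K\max_{\mu \in \sigma(B)} |z - \eta\mu|^{-1}$ on $\bH$. On $\Gamma$ with $\gamma = 2K\|M\| + 1$, the bound $|z - \eta\mu| \ge \gamma$ (which follows from $\Re(\mu) \le 0$) gives $\|(zI - \eta B)^{-1} M\| < 1/2$, so the Neumann expansion of $(zI - A)^{-1} = (I - (zI - \eta B)^{-1}M)^{-1}(zI - \eta B)^{-1}$ yields $\|(zI - A)^{-1}\| \le 2K\max_\mu |z - \eta\mu|^{-1}$; the inequality $\sqrt{a^2 + b^2} \ge (a + b)/\sqrt{2}$ then refines this to $|z - \eta\mu|^{-1} \le \sqrt{2}/(\gamma + \eta|\Re(\mu)| + |y - \eta\Im(\mu)|)$.

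Combining these two estimates and bounding $\max_\mu$ by $\sum_\mu$ reduces the desired inequality to a sum over $\mu \in \sigma(B)$ of integrals of the form $\cI(\eta\Im(\mu), 0, \gamma + \eta|\Re(\mu)|, \gamma)$. \Cref{lemma:integral} handles both regimes: purely imaginary $\mu$ falls under part 1 (equal $\beta$'s, with $d = \eta|\Im(\mu)|/\gamma$), while $\Re(\mu) < 0$ falls under part 2. Exploiting the monotonicity of $x \mapsto \log(1+x)/x$ and choosing $\delta$ as the smallest relevant spectral scale of $B$ (the minimum of $|\Im(\mu)|$ over the purely imaginary eigenvalues and of $|\Re(\mu)|$ over the strictly stable ones), each summand is bounded by a constant multiple of $\log((\eta\delta + \gamma)/\gamma)/(\eta\delta)$; the monotonicity step requires $\eta\delta \gtrsim \gamma$, which is precisely why the threshold $\eta \ge 6\gamma\delta^{-1}$ appears. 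Summing over the $r$ eigenvalues of $B$ produces the first bound.

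For the uniform version over $\theta$, one applies \Cref{thm:IKMT} directly to the uniformly quasi-stable family $\cF$ rather than to $\{\eta B\}$, which bypasses the Neumann perturbation step and allows $\Gamma$ to be placed at $\Re(z) = 1$; this eliminates the $\gamma T$-growth and yields the clean $e^T$ factor. The scaling hypothesis $f(\cdot;\omega\theta) = f(\omega\cdot;\theta)$ allows the $\theta$-dependence of $\|f(\cdot;\theta)\|_{C^1}$ to be extracted into a supremum over the unit sphere via the substitution $\theta = |\theta|\tilde\theta$. I expect the main obstacle to be the spectral perturbation analysis underlying this second part: one must show that under the scale-separation hypothesis $\eta > \beta(|\theta| + 1)$, every $\lambda \in \sigma(\eta B + M(\theta))$ clusters sufficiently near $\eta\sigma(B)$ that either $|\Re(\lambda)| \gtrsim \eta\tilde\delta$ or $|\Im(\lambda)| \gtrsim \eta\tilde\delta$. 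Once this spectral clustering is established, the same $\cI$-type integrals produce the required $\log(\eta\tilde\delta + 1)/(\eta\tilde\delta)$ factor and close the argument.
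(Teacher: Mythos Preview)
Your proposal is correct and matches the paper's proof closely: Laplace inversion along $\Re(z)=\gamma$, the Neumann perturbation $(zI-A)^{-1}=(I-(zI-\eta B)^{-1}M)^{-1}(zI-\eta B)^{-1}$ with $\gamma=2K\|M\|+1$, the improved Kreiss bound (\Cref{thm:IKMT}) applied to $\{\eta B\}$, and reduction to the $\cI$-integrals of \Cref{lemma:integral} via the spectral dichotomy for $\sigma(B)$. For the second conclusion the paper likewise applies \Cref{thm:IKMT} directly to $\cF$ with contour at $\Re(z)=1$, uses the scaling hypothesis on $f$, and handles precisely the eigenvalue-clustering step you flag as the main obstacle.
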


\begin{proof}
  For a complex number $z \in \bC$, let us denote its real part and 
  imaginary part by $\gamma = \Re(z)$ and $y = \Im(z)$, respectively, 
  i.e., $z = \gamma + i y$ with $\gamma, y \in \bR$.
  We consider the zero extension of the function $f$, i.e., we define 
  the value of $f(t)$ as $f(t) \equiv 0$ for each $t > T$, and we 
  still use the notation $f(t)$.
  The Laplace transform $\tilde{f}(z)$ of $f(t)$ is given as follows,
  \begin{multline*}
    \tilde{f}(z) := \int_0^T e^{- z t} f(t) \,\rd t = 
    \int_0^T e^{- i t y} e^{- \gamma t} f(t) \,\rd t \\
    = \left( \frac{1}{- i y} e^{-i t y} e^{- \gamma t} f(t) \right)
    \bigg|_{t = 0}^{t = T} - \int_0^T \frac{1}{- i y} e^{- i t y}
    \od{}{t} \left( e^{- \gamma t} f(t) \right) \,\rd t,
  \end{multline*}
  which yields the following estimate of $\tilde{f}(z)$:
  \begin{multline*}
    |\tilde{f}(z)| \le \min \left\{ \frac{1}{\gamma}, 
    \frac{2 + \gamma^{-1} + e^{- \gamma T}}{|y|} \right\} 
    \|f\|_{C^1([0,T])} \\ 
    \le \min \left\{ \frac{1}{\gamma}, \frac{4}{|y|} \right\}
    \|f\|_{C^1([0,T])}
    \le \frac{8 \|f\|_{C^1([0,T])}}{|y| + 4 \gamma},
  \end{multline*}
  where the constant $\gamma > 1$ is to be determined.
  The Laplace transform $\tilde{R}(z)$ of the function $R(t)$ defined 
  as 
  \begin{displaymath}
    R(t) = \int_0^t \exp \big( (\eta B + M)(t - s) \big) f(s) \,\rd s,
  \end{displaymath}
  is given by 
  \begin{displaymath}
    \tilde{R}(z) = \int_0^{+\infty} e^{- z t} R(t) \,\rd t = 
    (z I - \eta B - M)^{-1} \tilde{f}(z).
  \end{displaymath}
  By the inverse Laplace transform, one can obtain that 
  \begin{multline*}
    R(t) = \frac{1}{2 \pi i} 
    \int_{\gamma - i \infty}^{\gamma + i \infty} e^{t z} 
    \tilde{R}(z) \,\rd z 
    = \frac{e^{\gamma t}}{2 \pi} \int_{-\infty}^{+\infty}  
    e^{i t y} (z I - \eta B - M)^{-1} \tilde{f}(z) \,\rd y \\
    = \frac{e^{\gamma t}}{2 \pi} \int_{-\infty}^{+\infty} 
    e^{i t y} \big( I - (z I - \eta B)^{-1} M \big)^{-1} 
    (z I - \eta B)^{-1} \tilde{f}(z) \,\rd y.
  \end{multline*}
  Since the matrix $B$ is quasi-stable, the set of matrices $\eta B$ 
  with $\eta \ge 0$ is uniformly quasi-stable. By \Cref{thm:IKMT}, 
  there exists a constant $K > 0$ such that 
  \begin{displaymath}
    \|(z I - \eta B)^{-1}\| \le K \max_{\lambda \in \sigma(B)} 
    |z - \eta \lambda|^{-1} \le \frac{K}{\gamma}.
  \end{displaymath}
  By taking $\gamma = 2 K \|M\| + 1$, one can obtain that 
  \begin{displaymath}
    \|(z I - \eta B)^{-1} M\| \le \frac{K \|M\|}{\gamma} < \frac12,
  \end{displaymath}
  and thus, 
  \begin{displaymath}
    \big\| \big( I - (z I - \eta B)^{-1} M \big)^{-1} \big\| < 2.
  \end{displaymath}
  Therefore, 
  \begin{multline*}
    \|R(T)\| \le \frac{e^{\gamma T}}{2 \pi} \int_{-\infty}^{+\infty}
    \big\| \big( I - (z I - \eta B)^{-1} M \big)^{-1} \big\| \cdot 
    \|(z I - \eta B)^{-1}\| \cdot |\tilde{f}(z)| \,\rd y \\ 
    \le \frac{K e^{\gamma T}}{\pi}
    \|f\|_{C^1([0, T])} \int_{-\infty}^{+\infty} 
    \max_{\lambda \in \sigma(B)} |z - \eta \lambda|^{-1}
    \frac{8}{|y| + 4 \gamma} \,\rd y \\ 
    \le \frac{8 \sqrt{2} K e^{\gamma T}}{\pi} 
    \|f\|_{C^1([0, T])} 
    \sum_{\lambda \in \sigma(B)} \int_{-\infty}^{+\infty} 
    \frac{1}{|y - \eta \Im(\lambda)| + \gamma - \eta \Re(\lambda)}
    \cdot \frac{1}{|y| + 4 \gamma} \,\rd y \\ 
    = \frac{8 \sqrt{2} K e^{\gamma T}}{\pi} 
    \|f\|_{C^1([0, T])} \sum_{\lambda \in \sigma(B)} 
    \cI(\eta \Im(\lambda), 0, \gamma - \eta \Re(\lambda), 4 \gamma).
  \end{multline*}
  Note that the matrix $B$ is invertible and quasi-stable, there 
  exists a constant $\delta > 0$ dependent on $B$ such that 
  for each eigenvalue $\lambda \in \sigma(B)$ either 
  $\Re(\lambda) \le - \delta$ or $|\Im(\lambda)| \ge \delta$ holds.
  If $\Re(\lambda) \le - \delta$, then 
  \begin{displaymath}
    \cI(\eta \Im(\lambda), 0, \gamma - \eta \Re(\lambda), 4 \gamma) 
    \le \cI(\eta \Im(\lambda), 0, \gamma + \eta \delta, 4 \gamma)
    \le \frac{2}{\eta \delta - 3 \gamma} \log 
    \frac{\eta \delta + \gamma}{4 \gamma}.
  \end{displaymath}
  If $|\Im(\lambda)| \ge \delta$, then 
  \begin{displaymath}
    \cI(\eta \Im(\lambda), 0, \gamma - \eta \Re(\lambda), 4 \gamma) 
    \le \cI(\eta \delta, 0, \gamma, \gamma) \le 
    \frac{4 (\eta \delta + \gamma)}
    {\eta \delta (\eta \delta + 2 \gamma)} 
    \log \frac{\eta \delta + \gamma}{\gamma}.
  \end{displaymath}
  Both cases yield that 
  \begin{displaymath}
    \cI(\eta \Im(\lambda), 0, \gamma - \eta \Re(\lambda), 4 \gamma) 
    \le \frac{4}{\eta \delta} 
    \log \frac{\eta \delta + \gamma}{\gamma},
  \end{displaymath}
  as long as $\eta \delta \ge 6 \gamma$, which completes 
  the proof of the first conclusion of this lemma.

  Let us now prove the second conclusion of this lemma. 
  The Laplace transform of $f(t; \theta)$ satisfies the following 
  scaling property:
  \begin{displaymath}
    \tilde{f}(z; \omega \theta) = \omega^{-1}
    \tilde{f}(\omega^{-1} z; \theta), 
  \end{displaymath}
  for each $\omega > 0$, which yields the following estimate:
  \begin{displaymath}
    |\tilde{f}(z; \theta)| \le \frac{8}{|y| + 4 \gamma} 
    \|f(\cdot; |\theta|^{-1} \theta)\|_{C^1([0, +\infty))}.
  \end{displaymath}
  By applying \Cref{thm:IKMT} on the uniformly quasi-stable matrix 
  set $\cF$ and using the previous arguments, one can obtain the 
  following estimate: 
  \begin{multline*}
    \|R(T; \theta)\| \le \frac{4 \sqrt{2} \tilde{K} 
    e^{\tilde{\gamma} T}}{\pi}
    \sup_{|\theta| = 1} \|f(\cdot; \theta)\|_{C^1([0, +\infty))} \\
    \sum_{\lambda \in \sigma(\eta B + M(\theta))} 
    \cI(\Im(\lambda), 0, \tilde{\gamma} - \Re(\lambda), 
    4 \tilde{\gamma}),
  \end{multline*}
  where $\tilde{\gamma} > 0$ is arbitrary and to be determined, 
  and $\tilde{K} > 0$ depends on the matrix set $\cF$.
  Since the matrix $B$ is invertible, there exist constants 
  $\tilde{\alpha} > 0$ and $\tilde{\delta} > 0$ dependent on the 
  matrix set $\cF$ such that if $|\theta| \le \tilde{\alpha} \eta$, 
  then for each eigenvalue $\lambda \in \sigma(\eta B + M(\theta))$ 
  either $\Re(\lambda) \le - \eta \tilde{\delta}$ or 
  $|\Im(\lambda)| \ge \eta \tilde{\delta}$ holds.
  If $\Re(\lambda) \le - \eta \tilde{\delta}$, then 
  \begin{displaymath}
    \cI(\Im(\lambda), 0, \tilde{\gamma} - \Re(\lambda), 
    4 \tilde{\gamma}) \le 
    \cI(\Im(\lambda), 0, \tilde{\gamma} + \eta \tilde{\delta}, 
    4 \tilde{\gamma}) \le 
    \frac{2}{\eta \tilde{\delta} - 3 \tilde{\gamma}} 
    \log 
    \frac{\eta \tilde{\delta} + \tilde{\gamma}}{4 \tilde{\gamma}}.
  \end{displaymath}
  If $|\Im(\lambda)| \ge \eta \tilde{\delta}$, then 
  \begin{displaymath}
    \cI(\Im(\lambda), 0, \tilde{\gamma} - \Re(\lambda), 
    4 \tilde{\gamma}) \le 
    \cI(\eta \tilde{\delta}, 0, \tilde{\gamma}, \tilde{\gamma}) \le 
    \frac{4 (\eta \tilde{\delta} + \tilde{\gamma})}
    {\eta \tilde{\delta} (\eta \tilde{\delta} + 2 \tilde{\gamma})} 
    \log \frac{\eta \tilde{\delta} + \tilde{\gamma}}{\tilde{\gamma}}.
  \end{displaymath}
  Both cases yield that 
  \begin{displaymath}
    \cI(\Im(\lambda), 0, \tilde{\gamma} - \Re(\lambda), 
    \tilde{\alpha}_2) \le \frac{4}{\eta \tilde{\delta}} 
    \log \frac{\eta \tilde{\delta} + \tilde{\gamma}}{\tilde{\gamma}},
  \end{displaymath}
  as long as 
  $\eta \tilde{\delta} \ge 6 \tilde{\gamma}$.
  One can choose $\tilde{\gamma} = 1$, and then obtain the desired 
  result. 
\end{proof}

\subsection{Low-frequency estimate}
We would like to estimate the key term $G(t, \xi, \eta)$ in the 
Mori--Zwanzig technique presented in \cref{eq:MZ} at the 
low-frequency region by applying \Cref{lemma:gRL}, and then obtain 
the low-frequency estimate.

\begin{lemma}\label{lemma:low_frequency_G}
  If the matrix $B$ is quasi-stable, the following estimate of 
  $G(t, \xi, \eta)$ holds for $\eta > \beta \gamma$:
  \begin{displaymath}
    \|G(t, \xi, \eta)\| \le \frac{C |\xi| e^{\gamma t}}{\eta} 
    \|\exp(H_{11} \cdot)\|_{C^1([0, t])} 
    \log \frac{\eta \delta + \gamma}{\gamma},
  \end{displaymath}
  where $\gamma = \alpha |\xi| + 1$, and the positive constants 
  $C$, $\delta$, $\alpha$ and $\beta$ depend on 
  the matrices $A^j$ and $Q$.
  Furthermore, if the matrix sets $\cF_1$ and $\cF_2$ defined in 
  \cref{eq:F1,eq:F2}, respectively, are uniformly quasi-stable, then 
  the following estimate of $G(t, \xi, \eta)$ holds for 
  $\eta > \tilde{\beta}(|\xi| + 1)$: 
  \begin{displaymath}
    \|G(t, \xi, \eta)\| \le \frac{\tilde{C} |\xi| e^t}{\eta} 
    \log(\eta \tilde{\delta} + 1),
  \end{displaymath}
  where the positive constants $\tilde{C}$, $\tilde{\delta}$ and 
  $\tilde{\beta}$ depend on the matrices $A^j$ and $Q$.
\end{lemma}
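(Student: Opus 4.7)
The plan is to reduce the estimation of $G(t, \xi, \eta)$ to the generalized Riemann--Lebesgue lemma (\Cref{lemma:gRL}), which handles integrals of the form $\int_0^T \exp((\eta B + M)(T-s)) f(s) \, ds$ with the stiff matrix exponential on the \emph{left}. In our integrand, $\exp(H_{22} s)$ instead sits on the right of $H_{12}$, so I would first pass to the Hermitian adjoint and change variables $u = t - s$ to obtain
\[
  G(t,\xi,\eta)^{*} = \int_{0}^{t} \exp\big(H_{22}^{*}(t-u)\big)
  H_{12}^{*} \exp(H_{11}^{*} u) \, du.
\]
The matrix $B^{*}$ inherits invertibility and quasi-stability from $B$, so this rewritten integral fits the hypotheses of \Cref{lemma:gRL} with stiff block $\eta B^{*}$, with $M(\xi) = i\sum_{j}\xi_{j} A_{j,22}^{*}$, and with $f(u;\xi) = H_{12}^{*}(\xi)\exp(H_{11}^{*}(\xi) u)$.

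For the first conclusion, I would invoke the first conclusion of \Cref{lemma:gRL} directly. Since $M$ is linear in $\xi$, the bound $\|M(\xi)\| \leq c|\xi|$ yields $\gamma = 2K\|M\|+1 \leq \alpha|\xi|+1$, as in the stated bound. The $C^{1}$-norm of the integrand factors as $\|f(\cdot;\xi)\|_{C^{1}([0,t])} \leq \|H_{12}^{*}(\xi)\|\cdot\|\exp(H_{11}^{*}\cdot)\|_{C^{1}([0,t])} \leq C|\xi|\cdot\|\exp(H_{11}\cdot)\|_{C^{1}([0,t])}$, using $\|H_{12}(\xi)\|\leq c|\xi|$ together with the identity $\|\exp(A^{*})\| = \|\exp(A)\|$. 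Substituting this into the lemma's estimate and using $\|G\| = \|G^{*}\|$ yields the stated bound, with the threshold $\eta \geq 6\gamma/\delta$ rewriting as $\eta > \beta\gamma$ for $\beta = 6/\delta$.

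For the second conclusion, the required homogeneity $f(\cdot;\omega\xi) = f(\omega\cdot;\xi)$ in fact \emph{fails} for our $f$: the linearity of $H_{12}$ in $\xi$ forces $f(u;\omega\xi) = \omega f(\omega u;\xi)$, producing one extra factor of $\omega$. I would fix this by renormalizing, $\tilde{f}(u;\xi) := f(u;\xi)/|\xi|$ for $\xi \neq 0$, which does satisfy the correct scaling, and pulling the factor $|\xi|$ out of the integral:
\[
  G^{*} = |\xi|\int_{0}^{t} \exp\big(H_{22}^{*}(t-u)\big)
  \tilde{f}(u;\xi) \, du.
\]
The uniform quasi-stability of $\cF_{2}$ transfers to its adjoint set and supplies the matrix-set hypothesis. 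To bound $\sup_{|\xi|=1}\|\tilde{f}(\cdot;\xi)\|_{C^{1}([0,t])}$ uniformly in $t$, I would combine the uniform estimate $\|\exp(H_{11}(\xi) u)\| \leq C$ on the unit sphere, which comes from the uniform quasi-stability of $\cF_{1}$, with the trivial bounds $\|H_{11}(\xi)\|, \|H_{12}(\xi)\| \leq C'$ for $|\xi|=1$. Invoking the second conclusion of \Cref{lemma:gRL} then gives the second estimate.

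The main subtlety is this scaling correction in the second conclusion: the linearity of $H_{11}$ and $H_{12}$ in $\xi$ introduces an extra degree of homogeneity that must be extracted before the Riemann--Lebesgue-type lemma applies. The remaining work is routine bookkeeping of constants across the adjoint transformation.
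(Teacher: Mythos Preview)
Your proposal is correct and matches the paper's approach: the paper's proof reads in full, ``This is a direct corollary of \Cref{lemma:gRL}.'' You have correctly supplied the details the paper omits, including the adjoint plus change-of-variables step to place the stiff exponential on the left and the renormalization $\tilde f = f/|\xi|$ needed to restore the homogeneity hypothesis $f(\cdot;\omega\xi)=f(\omega\cdot;\xi)$ in the second conclusion.
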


\begin{proof}
  This is a direct corollary of \Cref{lemma:gRL}.
\end{proof}

\begin{remark}
  The second conclusion in \Cref{lemma:gRL} is crucial for the 
  estimation of $G(t, \xi, \eta)$ that depends linearly on 
  $|\xi|$ and vanishes as $\eta$ tends to infinity. This aspect is 
  essential to derive the corresponding error estimate in \Cref{thm}.
\end{remark}

\begin{lemma}\label{lemma:low_frequency}
  If \cref{eq} is stiffly well-posed, the following convergence holds 
  for each $t > 0$ and $\Xi > 0$:
  \begin{displaymath}
    \lim \limits_{\eta \to +\infty}
    \norm{ \big( \hat{u}(t, \cdot, \eta) - \exp(H_{11} t) \hat{u}_0
    \big) \mathbbm{1}_{|\cdot| \le \Xi} }_{L^2} = 0.
  \end{displaymath}
  Furthermore, if the matrix sets $\cF_1$ and $\cF_2$ are uniformly 
  quasi-stable, and the initial data satisfies $U_0 \in H^2$, 
  then there exist positive constants $C$, $\tilde{\delta}$ and 
  $\tilde{\beta}$ dependent on the matrices $A_j$ and $Q$ such that 
  the following estimate holds for each $t > 0$ and 
  $\eta > 2 \tilde{\beta}$:
  \begin{equation}\label{eq:low_frequency_error}
    \norm{ \big( \hat{u}(t, \cdot, \eta) - \exp(H_{11} t) \hat{u}_0
    \big) \mathbbm{1}_{|\cdot| \le \Xi(\eta)} }_{L^2} \le 
    \frac{C e^{t}}{\eta} \|U_0\|_{H^2}
    \log (\eta \tilde{\delta} + 1),
  \end{equation}
  where $\Xi(\eta) = \tilde{\beta}^{-1} \eta - 1$.
\end{lemma}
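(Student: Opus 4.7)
The plan is to exploit the Mori--Zwanzig identity~\eqref{eq:MZ}, which already isolates the two offending terms in the error:
\begin{displaymath}
\hat{u}(t,\xi,\eta) - \exp(H_{11}t)\hat{u}_0(\xi) = G(t,\xi,\eta)\hat{v}_0(\xi) + \int_0^t G(t-s,\xi,\eta) H_{21} \hat{u}(s,\xi,\eta)\,\rd s.
\end{displaymath}
Both conclusions reduce to bounding these two pieces in $L^2_\xi$ over the admissible low-frequency region, using three ingredients: the decay estimates on $G$ from \Cref{lemma:low_frequency_G}, the uniform solution bound $\|\hat{u}(s,\xi,\eta)\| \le K\|\hat{U}_0(\xi)\|$ coming from stiff well-posedness via \Cref{thm:KMT}, and the observation that $\|H_{21}(\xi)\| \le C|\xi|$ from the definition $H_{21} = -i\sum_j \xi_j A_{j,21}$.

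For the first (qualitative) conclusion, I would fix $\Xi > 0$ and apply the first estimate of \Cref{lemma:low_frequency_G}. On the compact set $\{|\xi|\le\Xi\}$, the parameter $\gamma = \alpha|\xi|+1$ is uniformly bounded and $\|\exp(H_{11}(\xi)\cdot)\|_{C^1([0,t])}$ is bounded by continuity in $\xi$, so for $\eta$ large one obtains $\|G(\tau,\xi,\eta)\| \le C(t,\Xi)\eta^{-1}\log(\eta\delta+\gamma_*)$ uniformly in $(\tau,\xi) \in [0,t]\times\{|\xi|\le\Xi\}$; this quantity tends to zero as $\eta\to\infty$. Combining with the uniform solution bound and $\|H_{21}\|\le C\Xi$, the pointwise error is dominated by $C'(t,\Xi)\sup_{\tau,|\xi|\le\Xi}\|G(\tau,\xi,\eta)\|\cdot\|\hat{U}_0(\xi)\|$. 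Squaring and integrating over $|\xi|\le\Xi$, together with the Plancherel identity $\|\hat{U}_0\|_{L^2}=\|U_0\|_{L^2}<\infty$, gives the claimed vanishing $L^2$ limit.

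For the quantitative second conclusion, I would invoke the sharp bound of \Cref{lemma:low_frequency_G} valid when $\cF_1$ and $\cF_2$ are uniformly quasi-stable: on $|\xi| < \Xi(\eta) = \tilde{\beta}^{-1}\eta - 1$, equivalent to $\eta > \tilde{\beta}(|\xi|+1)$, one has $\|G(\tau,\xi,\eta)\| \le \tilde{C}|\xi|e^\tau\eta^{-1}\log(\eta\tilde{\delta}+1)$. For the first piece $G\hat{v}_0$, the extra factor $|\xi|$ promotes the integrand to $|\xi|^2\|\hat{v}_0(\xi)\|^2$ after squaring, bounded in $L^2_\xi$ by $\|U_0\|_{H^1}^2$. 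For the convolution piece, the $|\xi|$ from $\|H_{21}\|$ compounds with the $|\xi|$ from $G$ and the uniform solution bound yields a pointwise integrand of order $|\xi|^2\|\hat{U}_0(\xi)\|$; the time convolution is absorbed by $\int_0^t e^{t-s}\,\rd s \le e^t$, and squaring and integrating in $\xi$ produces $\|U_0\|_{H^2}^2$. Summing the two contributions gives the announced $C e^t\eta^{-1}\log(\eta\tilde{\delta}+1)\|U_0\|_{H^2}$ bound.

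The main subtlety is the bookkeeping that forces $H^2$ rather than $H^1$ regularity: the convolution term carries two independent factors of $|\xi|$, one from the sharpened bound on $G$ and one from $\|H_{21}(\xi)\|$, so after squaring the integrand acquires $|\xi|^4$, which is exactly what converts $\|\hat{U}_0\|_{L^2}$ into $\|U_0\|_{H^2}$. The admissibility condition $\eta > \tilde{\beta}(|\xi|+1)$ of the sharp bound is precisely matched by the cutoff $\Xi(\eta) = \tilde{\beta}^{-1}\eta - 1$, which is why this particular truncation appears in the statement.
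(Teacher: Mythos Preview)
Your proposal is correct and follows essentially the same route as the paper: both arguments start from the Mori--Zwanzig identity~\eqref{eq:MZ}, invoke \Cref{lemma:low_frequency_G} to control $G$ uniformly on the low-frequency set, use the uniform bound $\|\hat{u}(s,\xi,\eta)\|\le K_1\|\hat{U}_0(\xi)\|$ from stiff well-posedness, and count powers of $|\xi|$ (one from the sharp $G$-estimate, one from $H_{21}$) to land on the $H^2$ norm after taking $L^2_\xi$. Your explicit bookkeeping of the $|\xi|^4$ factor and the matching of the cutoff $\Xi(\eta)=\tilde{\beta}^{-1}\eta-1$ to the admissibility condition $\eta>\tilde{\beta}(|\xi|+1)$ is exactly what the paper does implicitly.
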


\begin{proof}
  By \Cref{lemma:low_frequency_G}, the following convergence holds:
  \begin{displaymath}
    \lim \limits_{\eta \to +\infty} \sup_{s \in [0, t]} 
    \sup_{|\xi| \le \Xi} \| G(s, \xi, \eta) \| = 0.
  \end{displaymath}
  Due to the stiff well-posedness of \cref{eq}, there exists a 
  constant $K_1 > 0$ such that 
  \begin{displaymath}
    \|{\hat{u}(s, \xi, \eta)}\| \le K_1 \|{\hat{U}_0(\xi)}\|,
  \end{displaymath}
  for each $s \ge 0$, $\xi \in \bR^d$ and $\eta \ge 0$. Therefore, 
  \begin{multline*}
    \norm{ \big( \hat{u}(t, \cdot, \eta) - \exp(H_{11} t) \hat{u}_0
    \big) \mathbbm{1}_{|\cdot| \le \Xi} }_{L^2} \\
    \le \norm{ G(t, \cdot, \eta) \hat{v}_0 
    \mathbbm{1}_{|\cdot| \le \Xi} }_{L^2} 
    + \int_0^t 
    \norm{ G(t-s, \cdot, \eta) H_{21} \hat{u}(s, \cdot, \eta)
    \mathbbm{1}_{|\cdot| \le \Xi} }_{L^2} \,\rd s \\
    \le \sup_{s \in [0, t]} \sup_{|\xi| \le \Xi} \| G(s, \xi, \eta) \|
    \cdot \bigg( \| \hat{v}_0 \|_{L^2} + 
    t K_1 \sup_{|\xi| \le \Xi} \| H_{21}(\xi, 0) \| \cdot
    \| \hat{U}_0 \|_{L^2} \bigg),
  \end{multline*}
  which yields the desired convergence.

  Let us now consider the case when the matrix sets $\cF_1$ and 
  $\cF_2$ are uniformly quasi-stable.
  Note that 
  \begin{multline*}
    \norm{ \big( \hat{u}(t, \cdot, \eta) - \exp(H_{11} t) \hat{u}_0
    \big) \mathbbm{1}_{|\cdot| \le \Xi(\eta)} }_{L^2} \\
    \le \norm{ G(t, \cdot, \eta) \hat{v}_0 
    \mathbbm{1}_{|\cdot| \le \Xi(\eta)} }_{L^2} 
    + \int_0^t 
    \norm{ G(t-s, \cdot, \eta) H_{21} \hat{u}(s, \cdot, \eta)
    \mathbbm{1}_{|\cdot| \le \Xi(\eta)} }_{L^2}
    \,\rd s.
  \end{multline*}
  By \Cref{lemma:low_frequency_G},
  the first right-hand term in this inequality can be bounded by 
  \begin{displaymath}
    \norm{ G(t, \cdot, \eta) \hat{v}_0 
    \mathbbm{1}_{|\cdot| \le \Xi(\eta)} }_{L^2} \le 
    \frac{\tilde{C} e^t}{\eta} \|v_0\|_{H^1}
    \log(\eta \tilde{\delta} + 1),
  \end{displaymath}
  and the integrand in the integral term of this inequality can be 
  estimated as follows:
  \begin{displaymath}
    \norm{ G(t-s, \cdot, \eta) H_{21} \hat{u}(s, \cdot, \eta)
    \mathbbm{1}_{|\cdot| \le \Xi(\eta)} }_{L^2} \le 
    \frac{\tilde{C} e^{t - s}}{\eta} \|U_0\|_{H^2}
    \log(\eta \tilde{\delta} + 1),
  \end{displaymath}
  where the positive constants $\tilde{C}$ and $\tilde{\delta}$ 
  depend on the matrices $A_j$ and $Q$.
  These estimates complete the proof of this lemma.
\end{proof}

\subsection{Strong hyperbolicity of reduced systems}
Let us investigate a nontrivial corollary of stiff 
well-posedness. 
The following lemma shows that stiff well-posedness is sufficient 
to guarantee the strong hyperbolicity of reduced systems. 

\begin{lemma}\label{lemma:UQS_of_F1}
  If \cref{eq} is stiffly well-posed, then the set $\cF_1$ 
  of matrices defined in \cref{eq:F1} is uniformly quasi-stable.
\end{lemma}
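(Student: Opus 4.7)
The plan is to exploit the Mori--Zwanzig identity \cref{eq:MZ} applied to initial data in which the fast component vanishes. Fix any $\xi \in \bR^d$ and $t \ge 0$, and pick an arbitrary vector $\hat{u}_0 \in \bC^{n-r}$. Consider the Fourier-transformed Cauchy problem with initial datum $(\hat{u}_0, 0)$, for which $\|\hat{U}_0\| = \|\hat{u}_0\|$. Setting $\hat{v}_0 = 0$ in \cref{eq:MZ} yields the clean identity
\[
  \exp(H_{11}(\xi) t) \hat{u}_0 = \hat{u}(t, \xi, \eta)
  - \int_0^t G(t - s, \xi, \eta) H_{21}(\xi) \hat{u}(s, \xi, \eta) \,\rd s,
\]
which exhibits $\exp(H_{11}(\xi) t) \hat{u}_0$ as a difference that we can estimate using only stiff well-posedness and the decay of $G$ established in \Cref{lemma:low_frequency_G}.

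By stiff well-posedness and \Cref{thm:KMT} there is a constant $K_1$ independent of $\xi$, $\eta$, and $s$ such that $\|\hat{u}(s, \xi, \eta)\| \le \|\hat{U}(s, \xi, \eta)\| \le K_1 \|\hat{u}_0\|$ for every $s \ge 0$ and $\eta \ge 0$. Substituting this uniform bound into the identity above gives
\[
  \|\exp(H_{11}(\xi) t) \hat{u}_0\| \le
  K_1 \|\hat{u}_0\| + K_1 \|\hat{u}_0\| \|H_{21}(\xi)\|
  \int_0^t \|G(t - s, \xi, \eta)\| \,\rd s.
\]
The matrix $B$ is quasi-stable (a consequence of stiff well-posedness), so the first conclusion of \Cref{lemma:low_frequency_G} applies; for our fixed $\xi$ and $t$ it bounds $\|G(\tau, \xi, \eta)\|$ on $[0, t]$ by a constant (depending on $\xi, t$) times $\eta^{-1} \log(\eta \delta + \gamma)$, which vanishes as $\eta \to \infty$. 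Sending $\eta \to \infty$ in the inequality above kills the integral and leaves $\|\exp(H_{11}(\xi) t) \hat{u}_0\| \le K_1 \|\hat{u}_0\|$. Because $\hat{u}_0$ is arbitrary and $K_1$ depends on neither $\xi$ nor $t$, we conclude $\sup_{\xi, t} \|\exp(H_{11}(\xi) t)\| \le K_1$, which is precisely the uniform quasi-stability of $\cF_1$.

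The only delicate point I expect is that the bound in \Cref{lemma:low_frequency_G} contains the factor $\|\exp(H_{11}(\xi) \cdot)\|_{C^1([0, t])}$, i.e.\ essentially the very object we are trying to bound uniformly. This is only apparent circularity: for each individual $\xi$ the matrix $H_{11}(\xi)$ is a fixed finite-dimensional matrix whose exponential is smooth in time, so its $C^1$ norm on $[0, t]$ is certainly finite (though possibly $\xi$-dependent and large), and its mere finiteness is all that is needed to justify the passage $\eta \to \infty$ at each fixed $(\xi, t)$. The final bound $K_1$ therefore inherits no $\xi$-dependence from it. In effect, the generalized Riemann--Lebesgue lemma \Cref{lemma:gRL} (routed through \Cref{lemma:low_frequency_G}) does the heavy lifting by ruling out any cumulative effect of the rapid fast-block dynamics, while stiff well-posedness supplies the remaining uniform control on $\hat{u}$, and the two combine to force the slow block to satisfy the exponential bound with the very same constant $K_1$.
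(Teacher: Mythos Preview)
Your proof is correct and takes a genuinely simpler route than the paper's. Both arguments fix $(\xi, t)$, choose initial data with $\hat{v}_0 = 0$, invoke stiff well-posedness for a uniform $C^0$ bound on $\hat{u}$, and let $\eta \to \infty$; the difference lies in which representation of $\hat{u}$ they start from. The paper works with the coupled Duhamel pair \cref{eq:duhamel_u}--\cref{eq:duhamel_v}: to show the integral in \cref{eq:duhamel_u} vanishes one must control $\hat{v}$, and applying \Cref{lemma:gRL} to \cref{eq:duhamel_v} then requires $\|\hat{u}\|_{C^1}$ \emph{uniformly in $\eta$}, which the paper secures by building a Picard iteration and invoking the Banach fixed-point theorem. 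You instead work directly from the closed Mori--Zwanzig identity \cref{eq:MZ}, in which all the $\eta$-dependence is packaged into the kernel $G$; the first conclusion of \Cref{lemma:low_frequency_G} bounds $G$ in terms of the $\eta$-independent quantity $\|\exp(H_{11}\cdot)\|_{C^1([0,t])}$, so the $C^0$ bound on $\hat{u}$ supplied by stiff well-posedness already suffices and no iteration is needed. Your remark that the apparent circularity is harmless---because only the finiteness of $\|\exp(H_{11}\cdot)\|_{C^1([0,t])}$ at each fixed $(\xi,t)$ is used, not any uniformity---is exactly the point that makes the shortcut work. The paper's fixed-point machinery does yield the extra information that $\|\hat{u}\|_{C^1}$ and $\|\hat{v}\|_{C^0}$ are bounded uniformly in $\eta$ for large $\eta$, which could conceivably be useful elsewhere, but it is not needed for the present lemma; your argument even delivers the slightly cleaner constant $K_1$ rather than $2\tilde{C}$.
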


\begin{proof}
  Let us take fixed $\xi \in \bR^d$ and $t > 0$. Our aim is to  
  prove that the matrix exponential $\|\exp(H_{11}(\xi, 0) t)\|$ can 
  be bounded from above by a constant independent of $\xi$ and $t$.
  For each $\tilde{u}_0 \in \bR^{n - r}$ and $T > 0$,
  let us define the metric space $X$ as follows, 
  \begin{multline*}
    X := \Big\{ (\tilde{u}, \tilde{v}) \in C^1([0, T], \bR^{n - r}) 
    \times C^0([0, T], \bR^r) \,\Big|\, \\
    \tilde{u}(0) = \tilde{u}_0, \tilde{v}(0) = 0,
    \|\tilde{u}\|_{C^1} \le \Lambda_1, 
    \|\tilde{v}\|_{C^0} \le \Lambda_2 \Big\},
  \end{multline*}
  with the metric $\rho$ defined as 
  \begin{displaymath}
    \rho \big( (\tilde{u}_1, \tilde{v}_1), (\tilde{u}_2, \tilde{v}_2) 
    \big) = \alpha_1 
    \|\tilde{u}_1 - \tilde{u}_2\|_{C^1} + 
    \alpha_2 \|\tilde{v}_1 - \tilde{v}_2\|_{C^0},
  \end{displaymath}
  where the positive constants $\Lambda_1$, $\Lambda_2$, $\alpha_1$, 
  and $\alpha_2$ are to be determined.
  Let us construct the following mapping, 
  \begin{displaymath}
    \cT : (\tilde{u}, \tilde{v}) \in X \mapsto 
    (\cT_1(\tilde{u}, \tilde{v}), \cT_2(\tilde{u}, \tilde{v})) \in 
    C^1([0, T], \bR^{n - r}) \times C^0([0, T], \bR^r),
  \end{displaymath}
  where 
  \begin{gather*}
    \cT_1(\tilde{u}, \tilde{v})(t) := \exp(H_{11} t) \tilde{u}_0 + 
    \int_0^t \exp(H_{11} (t - s)) H_{12} \tilde{v}(s) \,\rd s, \\
    \cT_2(\tilde{u}, \tilde{v})(t) := \int_0^t \exp(H_{22} (t - s)) 
    H_{21} \tilde{u}(s) \,\rd s.
  \end{gather*}
  Let us first prove that $\cT(\tilde{u}, \tilde{v}) \in X$ whenever 
  $(\tilde{u}, \tilde{v}) \in X$.
  By \Cref{lemma:gRL}, one can obtain that 
  \begin{displaymath}
    \|\cT_2(\tilde{u}, \tilde{v})\|_{C^0} \le C 
    \eta^{-1} |\log \eta| \cdot 
    \|\tilde{u}\|_{C^1},
  \end{displaymath}
  for sufficiently large $\eta$. By direct calculation, one can 
  obtain that 
  \begin{displaymath}
    \|\cT_1(\tilde{u}, \tilde{v})\|_{C^1} \le  
    C (\|\tilde{u}_0\| + \|\tilde{v}\|_{C^0}).
  \end{displaymath}
  Here the constant $C$ is dependent on $t$, $\xi$, $A_j$ and $Q$, 
  but is independent of $\eta$.
  As long as 
  \begin{displaymath}
    C \eta^{-1} |\log \eta| \cdot \Lambda_1 \le \Lambda_2, \quad 
    C (\|u_0\| + \Lambda_2) \le \Lambda_1,
  \end{displaymath}
  the mapping $\cT$ maps $X$ onto itself. Similarly, note that 
  \begin{multline*}
    \rho \big( \cT(\tilde{u}_1, \tilde{v}_1), 
    \cT(\tilde{u}_2, \tilde{v}_2) \big) \\
    = \alpha_1 \| \cT_1(\tilde{u}_1, \tilde{v}_1) - 
    \cT_1(\tilde{u}_2, \tilde{v}_2) \|_{C^1} 
    + \alpha_2 \| \cT_2(\tilde{u}_1, \tilde{v}_1) - 
    \cT_2(\tilde{u}_2, \tilde{v}_2) \|_{C^0} \\
    \le \alpha_1 C 
    \| \tilde{v}_1 - \tilde{v}_2 \|_{C^0} + 
    \alpha_2 C \eta^{-1} |\log \eta| \cdot 
    \| \tilde{u}_1 - \tilde{u}_2 \|_{C^1}  \\ 
    \le \frac{1}{2} \alpha_1  
    \| \tilde{u}_1 - \tilde{u}_2 \|_{C^1} + 
    \frac{1}{2} \alpha_2 
    \| \tilde{v}_1 - \tilde{v}_2 \|_{C^0} 
    = \frac{1}{2} \rho \big( (\tilde{u}_1, \tilde{v}_1), 
    (\tilde{u}_2, \tilde{v}_2) \big),
  \end{multline*}
  as long as 
  \begin{displaymath}
    \alpha_1 C \le \frac{1}{2} \alpha_2, \quad 
    \alpha_2 C \eta^{-1} |\log \eta| \le \frac{1}{2} \alpha_1.
  \end{displaymath}
  At this time, the mapping $\cT$ is a contraction mapping on $X$.
  For example, one can choose sufficiently large $\eta$ and 
  the constants $\Lambda_1$, $\Lambda_2$, $\alpha_1$ and $\alpha_2$ 
  given by
  \begin{displaymath}
    \Lambda_1 = 2 C \norm{\tilde{u}_0}, \quad 
    \Lambda_2 = \norm{\tilde{u}_0}, \quad 
    \alpha_1 = 1, \quad \alpha_2 = 2 C,
  \end{displaymath}
  such that all the requirements mentioned previously are satisfied.
  Let us construct the Picard sequence as follows,
  \begin{gather*}
    \tilde{u}_0(t) \equiv \tilde{u}_0, \quad \tilde{v}_0(t) \equiv 0, 
    \quad \forall t \in [0, T], \\
    (\tilde{u}_{k + 1}, \tilde{v}_{k + 1}) = 
    \cT(\tilde{u}_k, \tilde{v}_k), \quad \forall k \in \bN.
  \end{gather*}
  By the Banach fixed-point theorem, there exists 
  $(\tilde{u}_*, \tilde{v}_*) \in X$ such that 
  \begin{displaymath}
    \lim\limits_{k \to \infty}
    \rho \big( (\tilde{u}_k, \tilde{v}_k), 
    (\tilde{u}_*, \tilde{v}_*) \big) = 0.
  \end{displaymath}
  At this time, $(\tilde{u}_*, \tilde{v}_*)$ satisfies the equations
  \begin{gather*}
    \tilde{u}_*(t) = \exp(H_{11} t) \tilde{u}_0 + \int_0^t 
    \exp(H_{11} (t - s)) H_{12} \tilde{v}_*(s) \,\rd s, \\
    \tilde{v}_*(t) = \int_0^t \exp(H_{22} (t - s)) H_{21} 
    \tilde{u}_*(s) \,\rd s.
  \end{gather*}
  Therefore, $(\tilde{u}_*, \tilde{v}_*)$ is the solution to 
  \begin{gather*}
    \od{\tilde{u}}{t}  = H_{11} \tilde{u} + H_{12} \tilde{v}, \quad 
    \tilde{u}(0) = \tilde{u}_0, \\
    \od{\tilde{v}}{t}  = H_{21} \tilde{u} + H_{22} \tilde{v}, \quad 
    \tilde{v}(0) = 0. 
  \end{gather*}
  According to the definition of stiff well-posedness, 
  there exists a positive constant $\tilde{C}$ independent of 
  $t$, $\xi$ and $\eta$ such that 
  \begin{displaymath}
    \| \tilde{u}(t) \| \le \tilde{C} \|\tilde{u}_0\|.
  \end{displaymath}
  Note that there exists a constant $C > 0$ independent of $\eta$ 
  such that  
  \begin{multline*}
    \nnorm{\int_0^t \exp(H_{11} (t - s)) H_{12} \tilde{v}_*(s) 
    \,\rd s}
    \le C \|{\tilde{v}_*}\|_{C^0} 
    \le C^2 \eta^{-1} |\log \eta| \cdot 
    \|\tilde{u}_*\|_{C^1} \\
    \le C^2 \eta^{-1} |\log \eta| \cdot \Lambda_1 
    = 2 C^3 \eta^{-1} |\log \eta| \cdot \|\tilde{u}_0\|.
  \end{multline*}
  By choosing sufficiently large $\eta$, one can obtain that 
  \begin{displaymath}
    \|\exp(H_{11} t) \tilde{u}_0\| \le 2 \tilde{C} \|\tilde{u}_0\|,
  \end{displaymath}
  where $\tilde{C}$ is independent of $t$ and $\xi$.
\end{proof}

\subsection{High-frequency estimate}

Let us now consider the high-frequency estimate based on 
\Cref{lemma:UQS_of_F1}.

\begin{lemma}\label{lemma:high_frequency}
  If \cref{eq} is stiffly well-posed, the following convergence holds 
  for each $t > 0$: 
  \begin{displaymath}
    \lim_{\Xi \to +\infty} \sup_{t \ge 0} \sup_{\eta \ge 0}
    \norm{ \big( \hat{u}(t, \cdot, \eta) - \exp(H_{11} t) \hat{u}_0
    \big) \mathbbm{1}_{|\cdot| > \Xi} }_{L^2} = 0.
  \end{displaymath}
  Furthermore, if the initial data satisfies $U_0 \in H^s$ 
  with $s > 0$, then the following estimate holds:
  \begin{displaymath}
    \sup_{t \ge 0} \sup_{\eta \ge 0}
    \norm{ \big( \hat{u}(t, \cdot, \eta) - \exp(H_{11} t) \hat{u}_0
    \big) \mathbbm{1}_{|\cdot| > \Xi} }_{L^2} \le 
    \frac{C}{1 + \Xi^s} \|U_0\|_{H^s}.
  \end{displaymath}
\end{lemma}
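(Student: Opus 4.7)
The plan hinges on a single pointwise observation: the difference $\hat{u}(t, \xi, \eta) - \exp(H_{11}(\xi, 0) t) \hat{u}_0(\xi)$ admits a bound that depends only on $\|\hat{U}_0(\xi)\|$, so the $L^2$ tail in $|\xi|$ is controlled entirely by the tail of the initial datum and is uniform in $t$ and $\eta$.

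First I would combine two uniform ingredients. Stiff well-posedness, via the Kreiss matrix theorem applied to $\cF_0$, supplies a constant $K_1 > 0$ such that
\[
\|\hat{U}(t, \xi, \eta)\| \le K_1 \|\hat{U}_0(\xi)\|,
\]
uniformly in $t \ge 0$, $\xi \in \bR^d$, $\eta \ge 0$; in particular $\|\hat{u}(t, \xi, \eta)\| \le K_1 \|\hat{U}_0(\xi)\|$. Simultaneously, \Cref{lemma:UQS_of_F1} (strong hyperbolicity of the reduced system) gives the uniform quasi-stability of $\cF_1$, hence a constant $K_2 > 0$ with $\|\exp(H_{11}(\xi, 0) t)\| \le K_2$ uniformly in $\xi$ and $t$. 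The triangle inequality then yields the pointwise estimate
\[
\|\hat{u}(t, \xi, \eta) - \exp(H_{11}(\xi, 0) t) \hat{u}_0(\xi)\| \le (K_1 + K_2) \|\hat{U}_0(\xi)\|.
\]

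Next I would integrate over $|\xi| > \Xi$. The squared pointwise bound gives
\[
\bigl\| \bigl(\hat{u}(t, \cdot, \eta) - \exp(H_{11} t) \hat{u}_0\bigr) \mathbbm{1}_{|\cdot| > \Xi} \bigr\|_{L^2}^2 \le (K_1 + K_2)^2 \int_{|\xi| > \Xi} \|\hat{U}_0(\xi)\|^2 \,\rd\xi,
\]
with the right-hand side independent of $t$ and $\eta$. For $U_0 \in L^2$, this tail vanishes as $\Xi \to +\infty$ by the dominated convergence theorem, which is exactly the first conclusion. For the quantitative statement with $U_0 \in H^s$, I would use the standard Sobolev tail inequality
\[
\int_{|\xi| > \Xi} \|\hat{U}_0(\xi)\|^2 \,\rd\xi \le (1 + \Xi^2)^{-s} \|U_0\|_{H^s}^2,
\]
and note that $(1 + \Xi^2)^{s/2}$ is comparable to $1 + \Xi^s$ up to a constant depending only on $s$; taking the square root of the previous display gives the claimed decay $C (1+\Xi^s)^{-1} \|U_0\|_{H^s}$.

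There is essentially no local obstacle in this lemma: the whole difficulty lives upstream. The constant $K_1$ comes for free from stiff well-posedness through \Cref{thm:KMT}, whereas the constant $K_2$ encapsulates all the work done in \Cref{lemma:UQS_of_F1}, which in turn rests on the Mori--Zwanzig decoupling and the generalized Riemann--Lebesgue lemma \Cref{lemma:gRL}. Once those two uniform bounds are in hand, the high-frequency estimate reduces to an elementary frequency-cutoff argument and a Plancherel tail inequality.
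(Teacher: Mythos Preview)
Your proposal is correct and follows essentially the same route as the paper: both combine the uniform bound from stiff well-posedness of $\cF_0$ with the uniform bound on $\exp(H_{11} t)$ from \Cref{lemma:UQS_of_F1} to obtain a pointwise estimate by $\|\hat U_0(\xi)\|$, then control the $L^2$ tail in $\xi$. The only cosmetic difference is in the Sobolev tail step, where the paper uses the weight $\mathbbm{1}_{|\xi|>\Xi}\le (1+|\xi|^s)/(1+\Xi^s)$ while you use the equivalent weight $(1+|\xi|^2)^{s/2}$.
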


\begin{proof}
  Due to the uniform quasi-stability of the matrix sets 
  $\cF_0$ and $\cF_1$, there exists a constant $C > 0$ independent of 
  $t$, $\xi$ and $\eta$ such that 
  \begin{displaymath}
    \|\hat{u}(t, \xi, \eta) - \exp(H_{11} t) \hat{u}_0(\xi)\| \le 
    C \|\hat{U}_0(\xi)\|.
  \end{displaymath}
  Therefore, 
  \begin{displaymath}
    \sup_{t \ge 0} \sup_{\eta \ge 0}
    \norm{ \big( \hat{u}(t, \cdot, \eta) - \exp(H_{11} t) \hat{u}_0
    \big) \mathbbm{1}_{|\cdot| > \Xi} }_{L^2} \le C 
    \| \hat{U}_0 \mathbbm{1}_{|\cdot| > \Xi} \|_{L^2},
  \end{displaymath}
  which yields the desired convergence result.
  Furthermore, the desired error estimate holds for the initial data 
  $U_0 \in H^s$ with $s > 0$ by noting that 
  \begin{displaymath}
    \mathbbm{1}_{|\cdot| > \Xi}(\xi) \le 
    \frac{1 + |\xi|^s}{1 + \Xi^s},
  \end{displaymath}
  for each $\xi \in \bR^d$.
\end{proof}

\subsection{Asymptotic convergence and error estimate}
We are now ready to prove our main result on the asymptotic 
convergence of slow variables in the zero relaxation limit and the 
corresponding error estimate by applying the estimates in 
\Cref{lemma:low_frequency,lemma:high_frequency}.

\begin{proof}[Proof of \Cref{thm}]
  By \Cref{lemma:UQS_of_F1}, the stiff well-posedness of \cref{eq}
  ensures the strong hyperbolicity of reduced systems presented in 
  \cref{eq:reduced}.
  Given arbitrary $\mu > 0$ and $t > 0$, let us take a sufficiently 
  large $\Xi > 0$ such that 
  \begin{displaymath}
    \sup_{\eta \ge 0}
    \norm{ \big( \hat{u}(t, \cdot, \eta) - \exp(H_{11} t) \hat{u}_0
    \big) \mathbbm{1}_{|\cdot| > \Xi} }_{L^2} 
    < \frac{1}{2} \mu,
  \end{displaymath}
  according to \Cref{lemma:high_frequency}. 
  By \Cref{lemma:low_frequency}, there exists a constant 
  $\Theta > 0$ such that the following inequality holds for each 
  $\eta > \Theta$:
  \begin{displaymath}
    \norm{ \big( \hat{u}(t, \cdot, \eta) - \exp(H_{11} t) \hat{u}_0
    \big) \mathbbm{1}_{|\cdot| \le \Xi} }_{L^2} 
    < \frac{1}{2} \mu.
  \end{displaymath}
  At this time, one can obtain that 
  \begin{displaymath}
    \norm{ \big( \hat{u}(t, \cdot, \eta) - \exp(H_{11} t) \hat{u}_0
    \big) }_{L^2} < \mu,
  \end{displaymath}
  which yields the desired convergence.

  Furthermore, if the matrix set $\cF_2$ is uniformly quasi-stable 
  and the initial data satisfies $U_0 \in H^2$, then the 
  error estimate at the low-frequency region with the frequency 
  cutoff $\Xi(\eta) = \tilde{\beta}^{-1} \eta - 1$ can be given by 
  \cref{eq:low_frequency_error} in \Cref{lemma:low_frequency}.
  At the high-frequency region, the error can be bounded by 
  \begin{displaymath}
    \norm{ \big( \hat{u}(t, \cdot, \eta) - \exp(H_{11} t) \hat{u}_0
    \big) \mathbbm{1}_{|\cdot| > \Xi(\eta)} }_{L^2} \le 
    \frac{C}{1 + \Xi(\eta)^2} \|U_0\|_{H^2}.
  \end{displaymath}
  Combining the high- and low-frequency estimates, we complete 
  the proof of this theorem.
\end{proof}

\section{Conclusions}
\label{sec:concl}
In summary, this study advances the 
understanding of hyperbolic relaxation systems, focusing on the 
strong asymptotic convergence of slow variables under 
the minimal stability condition, leveraging the Mori--Zwanzig 
technique, frequency decomposition, and improved resolvent estimates. 
The Lax equivalence theorem for hyperbolic relaxation approximations 
is pivotal in this theory.

Our study reveals the strong asymptotic convergence of 
macroscopic variables in oscillatory systems, providing insights into 
fields like molecular dynamics simulations. 
The novel technique introduced in the proof, utilizing the Laplace 
transform and improved resolvent estimates, has the potential to 
enhance the estimation of solutions for well-posed Cauchy problems, 
especially in addressing resonance parts in oscillatory systems. 
Furthermore, the indirect approach to bound the high-frequency 
error, which involves investigating the reduced systems first, may 
inspire approaches to similar problems with small parameters.

\appendix
\section{Discussions of \Cref{thm:IKMT}}
\label{app:kreiss}
In this appendix, we will give a proof of \Cref{thm:IKMT} completely 
based on the Kreiss matrix theorem presented in \Cref{thm:KMT}.
We will also discuss several extensions of \Cref{thm:IKMT}, including 
resolvent estimates at any point in the resolvent set and the 
corresponding results for power-bounded matrices.

\subsection{Proof of \Cref{thm:IKMT}}

Using \Cref{lemma:QS_character}, we can prove that the quantity 
$\cK(M)$ is a measurement of quasi-stability of a 
given matrix $M$.
\begin{lemma}\label{lemma:QS_KM}
  Let $M \in \bC^{n \times n}$. The matrix $M$ is quasi-stable, if 
  and only if $\cK(M) < + \infty$.
\end{lemma}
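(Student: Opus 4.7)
The plan is to prove \Cref{lemma:QS_KM} by combining the characterization of quasi-stability in \Cref{lemma:QS_character} with an explicit Jordan-form calculation of the resolvent. Observe first that the denominator in the definition of $\cK(M)$ can be rewritten as $1/\min_{\lambda \in \sigma(M) \setminus \bH} |z - \lambda|$, so that $\cK(M) < +\infty$ is equivalent to the existence of a constant $C > 0$ for which $\|(zI - M)^{-1}\| \le C / \min_{\lambda \in \sigma(M) \setminus \bH} |z - \lambda|$ holds for every $z \in \bH$. This reformulation makes it clear that the quantity measures how tight the trivial resolvent lower bound from the nearest eigenvalue in the closed left half-plane can be made.

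For the forward implication, assume $M$ is quasi-stable. By \Cref{lemma:QS_character}, every eigenvalue satisfies $\Re(\lambda) \le 0$, so $\sigma(M) \setminus \bH = \sigma(M)$, and every Jordan block at a purely imaginary eigenvalue has size one. Writing $M = S J S^{-1}$ and expanding $(zI - J_r)^{-1} = \sum_{j=0}^{k_r - 1} N^j (z - \lambda_r)^{-(j+1)}$ for each Jordan block $J_r$ of size $k_r$, I would estimate each block separately. For blocks with $\Re(\lambda_r) < 0$, the uniform inequality $|z - \lambda_r| \ge |\Re(\lambda_r)|$ valid on $\bH$ lets me dominate $|z - \lambda_r|^{-(j+1)}$ by $|\Re(\lambda_r)|^{-j} |z - \lambda_r|^{-1}$ and collapse the sum into a bound of the form $C_r / |z - \lambda_r|$; for purely imaginary blocks, the size-one condition yields the exact identity $\|(zI - J_r)^{-1}\| = 1/|z - \lambda_r|$. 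Summing over the finitely many blocks and absorbing the factor $\|S\| \|S^{-1}\|$ gives $\|(zI - M)^{-1}\| \le C / \min_\lambda |z - \lambda|$, hence $\cK(M) < +\infty$.

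For the reverse implication, I would argue by contraposition through the local behavior of the resolvent near eigenvalues. If some eigenvalue $\mu$ satisfies $\Re(\mu) > 0$, then as $z \to \mu$ from within $\bH$ the numerator $\|(zI - M)^{-1}\|$ blows up while the denominator $\min_{\lambda \in \sigma(M) \setminus \bH} |z - \lambda|$ stays bounded away from zero (and the extreme case $\sigma(M) \subset \bH$ gives $\cK(M) = +\infty$ by convention), forcing $\cK(M) = +\infty$. Next, if some $\lambda_0$ with $\Re(\lambda_0) = 0$ admits a Jordan block of size $k \ge 2$, then taking $z = \lambda_0 + \eps$ with $\eps \to 0^+$ makes $\|(zI - M)^{-1}\|$ of order at least $\eps^{-k}$ while $\min_\lambda |z - \lambda| = \eps$, so the ratio is of order $\eps^{1-k}$ and diverges. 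Together these two contrapositives force both conditions of \Cref{lemma:QS_character}, yielding quasi-stability. The only technical point requiring care is the block estimate in the forward direction: one must verify that the higher-order poles $|z - \lambda_r|^{-(j+1)}$ coming from nontrivial Jordan blocks at left half-plane eigenvalues can uniformly be absorbed into a single factor $|z - \lambda_r|^{-1}$, which works precisely because the purely imaginary eigenvalues have been forced to be semi-simple while the strictly stable ones enjoy $|\Re(\lambda_r)| > 0$.
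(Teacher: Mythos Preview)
Your proposal is correct and follows essentially the same approach as the paper's proof: both directions rely on \Cref{lemma:QS_character}, with the forward implication handled by a Jordan-block resolvent expansion distinguishing the cases $\Re(\lambda_r)<0$ (where $|z-\lambda_r|\ge|\Re(\lambda_r)|$ absorbs the higher-order poles) and $\Re(\lambda_r)=0$ (where the block is $1\times 1$), and the reverse implication argued by contraposition via the local blow-up rate of the resolvent near an eigenvalue in $\bH$ or a non-semisimple purely imaginary eigenvalue. The paper's write-up is nearly identical in structure and detail.
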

\begin{proof}
  First, we assume that the matrix $M$ is quasi-stable. 
  \Cref{lemma:QS_character} yields that the spectrum $\sigma(M)$ is 
  a subset of the closed left half-plane $\bH^\rc$.
  Let $J = S M S^{-1}$ be the Jordan matrix of $M$. We notice that 
  \begin{displaymath}
    \norm{(z I - M)^{-1}} \le \norm{S^{-1}} \cdot 
    \norm{(z I - J)^{-1}} \cdot \norm{S}.
  \end{displaymath}
  To estimate $\norm{(z I - J)^{-1}}$, it suffices to estimate the 
  resolvent of each Jordan block. Let 
  \begin{displaymath}
    J_r = \begin{pmatrix}
      \lambda_r & 1 & & \\
      & \ddots & \ddots & \\
      & & \ddots & 1 \\
      & & & \lambda_r
    \end{pmatrix} = \lambda_r I + N \in \bC^{k \times k}
  \end{displaymath}
  be an arbitrary Jordan block of $J$.
  By direct calculation, one has that 
  \begin{equation}\label{equ:resolventJr}
    (z I - J_r)^{-1} = \sum_{j = 0}^{k - 1} 
      (z - \lambda_r)^{-(j + 1)} N^j, \quad z \in \bH.
  \end{equation}
  If $\Re(\lambda_r) < 0$, then for each $z \in \bH$, one has that 
  \begin{displaymath}
    \begin{aligned}
      \norm{(z I - J_r)^{-1}} & \le \left( 
        \sum_{j = 0}^{k - 1} |z - \lambda_r|^{-j} \norm{N}^j \right)
        \cdot |z - \lambda_r|^{-1} \\
      & \le \left( \sum_{j = 0}^{k - 1} |\Re(\lambda_r)|^{-j} 
        \norm{N}^j \right) \cdot 
        \max_{\lambda \in \sigma(M) \setminus \bH} |z - \lambda|^{-1}.
    \end{aligned}
  \end{displaymath}
  If $\Re(\lambda_r) = 0$, then \Cref{lemma:QS_character} yields that 
  the order of $J_r$ is $k = 1$, and thus 
  \begin{displaymath}
    \norm{(z I - J_r)^{-1}} = |z - \lambda_r|^{-1} \le 
    \max_{\lambda \in \sigma(M) \setminus \bH} |z - \lambda|^{-1},
    \quad z \in \bH.
  \end{displaymath}
  Therefore, we obtain that $\cK(M) < + \infty$.

  Conversely, we assume that $\cK(M) < + \infty$. Let $\lambda_r$ be 
  an arbitrary eigenvalue of $M$, and $J_r \in \bC^{k \times k}$ be 
  the largest Jordan block corresponding to $\lambda_r$. By 
  \cref{equ:resolventJr}, one can obtain that 
  \begin{displaymath}
    \norm{(z I - M)^{-1}} \sim |z - \lambda_r|^{-k}, \quad 
    \text{ as } z \rightarrow \lambda_r.
  \end{displaymath}
  If $\Re(\lambda_r) > 0$, then 
  \begin{displaymath}
    \frac{\norm{(z I - M)^{-1}}}
    {\max_{\lambda \in \sigma(M) \setminus \bH} |z - \lambda|^{-1}}
    \sim |z - \lambda_r|^{-k}, \quad \text{ as } 
    z \rightarrow \lambda_r.
  \end{displaymath}
  If $\Re(\lambda_r) = 0$ and $k > 1$, then 
  \begin{displaymath}
    \frac{\norm{(z I - M)^{-1}}}
    {\max_{\lambda \in \sigma(M) \setminus \bH} |z - \lambda|^{-1}}
    \sim |z - \lambda_r|^{- k + 1}, \quad \text{ as }
    z \rightarrow \lambda_r.
  \end{displaymath}
  Both cases yield a contradiction, and then the quasi-stability of 
  the matrix $M$ follows from \Cref{lemma:QS_character}.
\end{proof}

Here is an elementary fact from linear algebra. It asserts that 
for a set of unit upper triangular matrices that are upper 
triangular matrices with $1$ on the diagonal.
in $\bC^{n \times n}$, uniform boundedness is equivalent to uniform 
boundedness of inverse.
\begin{lemma}\label{lemma:unittri}
  If the matrix $A \in \bC^{n \times n}$ is a unit upper triangular 
  matrix with $\norm{A} \le \alpha$, then one has that 
  $\norm{A^{-1}} \le (n \alpha)^{n - 1}$.
\end{lemma}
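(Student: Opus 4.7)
The plan is to exploit the nilpotency of $N := A - I$. Because $A$ is unit upper triangular, $N$ is strictly upper triangular and $N^n = 0$, so the Neumann expansion
\[A^{-1} = \sum_{k=0}^{n-1} (-N)^k\]
is an honest finite sum. The whole task reduces to bounding the entries of each power $N^k$ in terms of $\alpha$, and then translating those entry bounds into an operator-norm bound.

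First I would extract entrywise information from $\|A\|\le\alpha$: every entry of $A$, and hence of $N$, satisfies $|N_{ij}|\le\alpha$. Expanding
\[(N^k)_{ij} \;=\; \sum_{i<i_1<\cdots<i_{k-1}<j} N_{i,i_1}N_{i_1,i_2}\cdots N_{i_{k-1},j}\]
as a sum over strictly increasing chains gives at most $\binom{j-i-1}{k-1}$ terms of modulus $\le\alpha^k$. Summing the finite Neumann series yields, for $i<j$,
\[|(A^{-1})_{ij}|\;\le\;\sum_{k=1}^{j-i}\binom{j-i-1}{k-1}\alpha^k\;=\;\alpha(1+\alpha)^{j-i-1},\]
together with $(A^{-1})_{ii}=1$ and zeros below the diagonal.

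To pass to the operator norm I would use $\|A^{-1}\|\le\sqrt{\|A^{-1}\|_1\|A^{-1}\|_\infty}$. A short geometric-sum computation shows that the $i$-th row sum is bounded by $1+\sum_{m=0}^{n-i-1}\alpha(1+\alpha)^m = (1+\alpha)^{n-i}$, and symmetrically every column sum is bounded by $(1+\alpha)^{j-1}$; both are maximised by $(1+\alpha)^{n-1}$. Hence $\|A^{-1}\|\le(1+\alpha)^{n-1}$. Since $Ae_n = e_n$ forces $\alpha=\|A\|\ge 1$, one has $1+\alpha\le 2\alpha\le n\alpha$ whenever $n\ge 2$, and the claim $\|A^{-1}\|\le(n\alpha)^{n-1}$ follows. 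The case $n=1$ is immediate.

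The main obstacle, such as it is, is purely combinatorial bookkeeping: recognising the chain sum as the binomial coefficient $\binom{j-i-1}{k-1}$, which is exactly what collapses the Neumann series into the clean geometric factor $\alpha(1+\alpha)^{j-i-1}$ and lets both row and column sums telescope. Everything else — entrywise bounds from the operator norm, the Riesz--Thorin style inequality $\|\cdot\|_2\le\sqrt{\|\cdot\|_1\|\cdot\|_\infty}$, and the final comparison $1+\alpha\le n\alpha$ — is routine.
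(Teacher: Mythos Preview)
Your proof is correct and in fact yields the sharper bound $\|A^{-1}\|\le(1+\alpha)^{n-1}$. One small slip: $Ae_n$ is the \emph{last} column of $A$, not $e_n$ in general; what you want is $Ae_1=e_1$ (the first column of a unit upper triangular matrix is $e_1$), or simply that the spectral radius of $A$ equals $1$. Either gives $\alpha\ge 1$, and the rest goes through unchanged.

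Your route is genuinely different from the paper's. The paper factorises $A$ as a product of $n-1$ elementary unipotent matrices $I+u_je_j^\top$ (one per off-diagonal column of $A$), inverts each factor explicitly as $I-u_je_j^\top$, and bounds each inverse factor in operator norm by $n\alpha$ via the chain $\|\cdot\|_2\le\sqrt{n}\,\|\cdot\|_1$ and $\|I-u_je_j^\top\|_1\le\|A\|_1\le\sqrt{n}\,\|A\|_2$. Multiplying the $n-1$ factor bounds gives $(n\alpha)^{n-1}$ directly. Your Neumann-series argument, by contrast, tracks entrywise information through the nilpotent powers and exploits the binomial identity that makes both row and column sums of $A^{-1}$ telescope to exact geometric series. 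The payoff is a strictly smaller constant, $(1+\alpha)^{n-1}$ versus $(n\alpha)^{n-1}$, at the price of slightly more combinatorial bookkeeping. The paper's factorisation is shorter and more structural; yours is more quantitative.
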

\begin{proof}
  Let the column partitioning of the strictly upper triangular 
  component of $A$ be 
  \begin{displaymath}
    U := A - I = \begin{pmatrix}
      0 & u_2 & \cdots & u_n
    \end{pmatrix}.
  \end{displaymath}
  Denote the $i$th column of the identity matrix $I$ as $e_i$.
  It is clear that $e_i^\top u_j = 0$ for $2 \le j \le i \le n$.
  Therefore, one can obtain that 
  \begin{multline*}
    A = I + u_n e_n^\top + u_{n - 1} e_{n - 1}^\top + 
    \ldots + u_2 e_2^\top \\ = \left( I + u_n e_n^\top \right) 
    \left( I + u_{n - 1} e_{n - 1}^\top \right)
    \ldots \left( I + u_2 e_2^\top \right),
  \end{multline*}
  and that 
  \begin{displaymath}
    I = \left( I + u_j e_j^\top \right) 
    \left( I - u_j e_j^\top \right), \quad 
    \text{ for } j = 2, 3, \ldots, n,
  \end{displaymath}
  which yields that 
  \begin{displaymath}
    A^{-1} = \left( I - u_2 e_2^\top \right)
    \left( I - u_3 e_3^\top \right) \ldots 
    \left( I - u_n e_n^\top \right).
  \end{displaymath}
  By the equivalence of $1$-norm and $2$-norm for matrices, one can 
  obtain that
  \begin{displaymath}
    \norm{I - u_j e_j^\top} \le \sqrt{n} \norm{I - u_j e_j^\top}_1 
    \le \sqrt{n} \norm{A}_1 \le n \norm{A} \le n \alpha, 
  \end{displaymath}
  for each $j = 2, 3, \ldots, n$.
  Therefore, one can obtain that 
  \begin{displaymath}
    \norm{A^{-1}} \le \norm{I - u_2 e_2^\top}
    \norm{I - u_3 e_3^\top} \ldots 
    \norm{I - u_n e_n^\top} \le (n \alpha)^{n - 1},
  \end{displaymath}
  which completes the proof.
\end{proof}

We are now set to prove \Cref{thm:IKMT}.
\begin{proof}[Proof of \cref{thm:IKMT}]
  First, we assume that the set of matrices $\cF$ satisfies 
  \cref{eq:uniformKM}. By \Cref{lemma:QS_KM}, each matrix in $\cF$ 
  is quasi-stable. For each $M \in \cF$, $z \in \bH$, and 
  $\lambda \in \sigma(M) = \sigma(M) \setminus \bH$, one has that 
  \begin{displaymath}
    0 < \Re(z) \le \Re(z - \lambda) \le |z - \lambda|,
  \end{displaymath}
  which yields that 
  \begin{multline*}
    \sup_{M \in \cF} \sup_{z \in \bH} \left( \Re(z) \cdot 
    \norm{(z I - M)^{-1}} \right) \\ \le \sup_{M \in \cF} 
    \sup_{z \in \bH} \frac{\norm{(z I - M)^{-1}}}
    {\max_{\lambda \in \sigma(M) \setminus \bH} |z - \lambda|^{-1}}
    = \sup_{M \in \cF} \cK(M) < + \infty.
  \end{multline*}
  Therefore, the set of matrices $\cF$ is uniformly quasi-stable 
  thanks to \Cref{thm:KMT}.

  Conversely, we assume that $\cF$ is uniformly quasi-stable. At this 
  time, the spectrum $\sigma(M)$ is a subset of the closed left 
  half-plane $\bH^\rc$ for each $M \in \cF$. 
  Using the notations in the third 
  condition of \Cref{thm:KMT}, we denote the diagonal component and 
  the strictly upper triangular component of $S M S^{-1}$ by $D$ and 
  $N$, respectively, that is, 
  \begin{displaymath}
    D = D(M) = \diag \left\{ b_{11}, b_{22}, \ldots, b_{nn} \right\},
    \quad N = N(M) = S M S^{-1} - D.
  \end{displaymath}
  By direct calculation, for any $z \in \bH$, one has that 
  \begin{displaymath}
    (z I - M)^{-1} = S^{-1} (z I - D - N)^{-1} S = 
    S^{-1} \left( I - (z I - D)^{-1} N \right)^{-1} (z I - D)^{-1} S,
  \end{displaymath}
  which yields that 
  \begin{displaymath}
    \norm{(z I - M)^{-1}} \le \kappa(S) \cdot 
    \norm{\left( I - (z I - D)^{-1} N \right)^{-1}} \cdot 
    \norm{(z I - D)^{-1}},
  \end{displaymath}
  where the condition number of $S$ can be estimated as 
  \begin{displaymath}
    \kappa(S) := \norm{S} \cdot \norm{S^{-1}} \le 
    \frac{\left( \norm{S} + \norm{S^{-1}} \right)^2}{4} \le 
    \frac{K_{31}^2}{4}.
  \end{displaymath}
  For each $1 \le i < j \le n$, the absolute value of the $(i,j)$-th 
  entry of $(z I - D)^{-1} N$ is 
  \begin{equation}\label{equ:entry_estimate}
    \frac{|b_{ij}|}{|z - b_{ii}|} \le 
    \frac{K_{32} |\Re(b_{ii})|}{|\Re(z) - \Re(b_{ii})|} \le K_{32}.
  \end{equation}
  \Cref{lemma:unittri} yields that there exists a constant $C > 0$ 
  dependent on $n$ and $K_{32}$ such that 
  \begin{displaymath}
    \norm{\left( I - (z I - D)^{-1} N \right)^{-1}} \le C,
  \end{displaymath}
  since the matrix $I - (z I - D)^{-1} N$ is unit upper triangular.
  We notice that 
  \begin{displaymath}
    \norm{(z I - D)^{-1}} = \max_{\lambda \in \sigma(M)} 
    |z - \lambda|^{-1} = 
    \max_{\lambda \in \sigma(M) \setminus \bH} 
    |z - \lambda|^{-1}.
  \end{displaymath}
  Therefore, we have that 
  \begin{displaymath}
    \sup_{M \in \cF} \cK(M) \le \frac{K_{31}^2 C}{4} < + \infty,
  \end{displaymath}
  which completes the proof of the theorem.
\end{proof}

\subsection{Extensions}
Let us generalize \Cref{thm:IKMT} and discuss its 
relations with several previous results.

\subsubsection{Resolvent estimate in the resolvent set}
From the proof of the necessity part of \Cref{thm:IKMT}, one can 
observe that it is not essential to take $z$ in the open right 
half-plane $\bH$ to obtain a resolvent estimate as in 
\cref{eq:main_resolvent}. Given a matrix $M \in \cF$ and a number 
$r > 0$, let us denote 
\begin{displaymath}
  \cS(M, r) := 
  \left\{ z \in \bC : \max_{\lambda \in \sigma(M)} 
  \frac{|\Re(\lambda)|}{|z - \lambda|} \le \frac{1}{r} \right\}.
\end{displaymath}
The following theorem provides a resolvent estimate at any point in 
the resolvent set. Similar results can be found in 
\cite{miller_resolvent_1968}.
\begin{theorem}\label{thm:main_miller}
  If a set of matrices $\cF$ is uniformly quasi-stable, then there 
  exists a positive constant $K$ such that 
  \begin{displaymath}
    \norm{(z I - M)^{-1}} \le K \left( 1 + \frac{1}{r} \right)^{n - 1}
    \max_{\lambda \in \sigma(M)} |z - \lambda|^{-1},
  \end{displaymath}
  for all $M \in \cF$, $r > 0$ and $z \in \cS(M, r)$.
\end{theorem}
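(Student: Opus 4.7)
The plan is to adapt the proof of \Cref{thm:IKMT} in a direct way, replacing the role of the right half-plane $\bH$ with the more general set $\cS(M, r)$. As in that proof, uniform quasi-stability lets me invoke the third condition of \Cref{thm:KMT} to obtain, for each $M \in \cF$, a similarity $S$ with $\norm{S} + \norm{S^{-1}} \le K_{31}$ such that $S M S^{-1} = D + N$, where $D$ is the diagonal part and $N$ the strictly upper triangular part, and the entries of $N$ satisfy $|b_{ij}| \le K_{32} |\Re(b_{ii})|$. For any $z$ in the resolvent set I still have the factorization
\begin{displaymath}
  (zI - M)^{-1} = S^{-1} \bigl( I - (zI - D)^{-1} N \bigr)^{-1} (zI - D)^{-1} S,
\end{displaymath}
so that $\norm{(zI - M)^{-1}}$ splits into the three factors $\kappa(S) \le K_{31}^2/4$, a unit-upper-triangular inverse, and the diagonal part $\norm{(zI - D)^{-1}} = \max_{\lambda \in \sigma(M)} |z - \lambda|^{-1}$, which already carries the desired maximum.

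The key new ingredient is the entry-wise estimate for $z \in \cS(M, r)$: for each $1 \le i < j \le n$,
\begin{displaymath}
  \frac{|b_{ij}|}{|z - b_{ii}|} \le K_{32} \cdot \frac{|\Re(b_{ii})|}{|z - b_{ii}|} \le \frac{K_{32}}{r},
\end{displaymath}
which is the analogue of \eqref{equ:entry_estimate} valid throughout $\cS(M, r)$, not only in $\bH$. Passing from entries to the spectral norm via, say, the Frobenius bound, I get $\norm{(zI - D)^{-1} N} \le C_n K_{32}/r$ for a purely dimensional constant $C_n$, hence $\norm{I - (zI - D)^{-1} N} \le 1 + C_n K_{32}/r \le C'_n (1 + 1/r)$.

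Now I apply \Cref{lemma:unittri} to the unit upper triangular matrix $A = I - (zI - D)^{-1} N$ with $\alpha = C'_n (1 + 1/r)$. This gives
\begin{displaymath}
  \bigl\| \bigl( I - (zI - D)^{-1} N \bigr)^{-1} \bigr\| \le \bigl( n C'_n (1 + 1/r) \bigr)^{n-1},
\end{displaymath}
which is exactly where the $(1 + 1/r)^{n-1}$ factor in the statement is produced. Combining the three factors and collecting all dimension- and $K_{3\star}$-dependent constants into a single $K$ yields the claimed bound uniformly over $M \in \cF$, $r > 0$, and $z \in \cS(M, r)$.

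The proof is essentially bookkeeping rather than a new idea; the only mild subtlety is verifying that the $(1 + 1/r)^{n-1}$ scaling in the inverse of the unit triangular factor is sharp in the sense produced by \Cref{lemma:unittri}, and that no hidden dependence on $M$ slips into the constant. Neither is a real obstacle: the entry bound $K_{32}/r$ is uniform in $M \in \cF$ by condition 3 of \Cref{thm:KMT}, and the exponent $n-1$ is the generic one furnished by \Cref{lemma:unittri} for $n \times n$ unit triangular matrices.
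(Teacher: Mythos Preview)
Your proposal is correct and follows essentially the same approach as the paper's own proof: both invoke condition~3 of \Cref{thm:KMT} to write $S M S^{-1} = D + N$, use the factorization $(zI - M)^{-1} = S^{-1}\bigl(I - (zI-D)^{-1}N\bigr)^{-1}(zI-D)^{-1}S$, derive the entry bound $|b_{ij}|/|z - b_{ii}| \le K_{32}/r$ from the definition of $\cS(M,r)$, and then feed this into \Cref{lemma:unittri} to produce the $(1+1/r)^{n-1}$ factor. Your write-up is in fact slightly more explicit than the paper's about the passage from the entry-wise bound to a norm bound on $I - (zI-D)^{-1}N$ before applying \Cref{lemma:unittri}.
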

\begin{proof}
  Let us take $M \in \cF$, $r > 0$ and $z \in \cS(M, r)$. Using the 
  notations in the third condition of \Cref{thm:KMT}, for each 
  $1 \le i < j \le n$, one can obtain that 
  \begin{displaymath}
    \frac{|b_{ij}|}{|z - b_{ii}|} \le 
    \frac{K_{32} |\Re(b_{ii})|}{|z - b_{ii}|} \le \frac{K_{32}}{r},
  \end{displaymath}
  which is analogous to \cref{equ:entry_estimate}. 
  According to \Cref{lemma:unittri}, it follows that there exists a 
  constant $C$ dependent on $n$ and $K_{32}$ such that 
  \begin{displaymath}
    \norm{\left( I - (z I - D)^{-1} N \right)^{-1}} \le 
    C \left( 1 + \frac{1}{r} \right)^{n - 1}.
  \end{displaymath}
  Hence the proof is completed by a similar argument in the proof of 
  \Cref{thm:IKMT}.
\end{proof}

\subsubsection{Power-bounded matrices}
Analogous results for power-bounded matrices can be proven similarly. 
Let us recall the following Kreiss matrix theorem 
\cite{kreiss_stabilitatsdefinition_1962} for power-bounded matrices.
\begin{theorem}[Kreiss matrix theorem. See Satz 4.1 of 
  \cite{kreiss_stabilitatsdefinition_1962}]
  \label{thm:KMT_power}
Let $\cF$ denote a set of matrices in $\bC^{n \times n}$.
The following four conditions are equivalent.
\begin{enumerate}
  \item There exists a constant $K_1$ such that  
    $\norm{M^\nu} \le K_1$ for all $M \in \cF$ and 
    $\nu \in \bN$.
  \item There exists a constant $K_2$ such that 
    \begin{displaymath}
      \norm{(z I - M)^{-1}} \le \frac{K_2}{|z| - 1}, 
    \end{displaymath}
    for all $M \in \cF$ and $z \in \bC$ with $|z| > 1$.
  \item There exist constants $K_{31}$, $K_{32}$ such that 
    for each $M \in \cF$, there exists a 
    transformation $S = S(M)$ with 
    $\norm{S} + \norm{S^{-1}} \le K_{31}$,
    the matrix $S M S^{-1}$ is upper triangular,
    \begin{displaymath}
      S M S^{-1} = \begin{pmatrix}
        b_{11} & b_{12} & \cdots & b_{1n} \\
              & b_{22} & \cdots & b_{2n} \\
              &    & \ddots & \vdots \\
              &    &    & b_{nn} \\
      \end{pmatrix},
    \end{displaymath}
    the diagonal is ordered, 
    \begin{displaymath}
      1 \ge |b_{11}| \ge |b_{22}| \ge \cdots \ge 
      |b_{nn}|,
    \end{displaymath}
    and the upper diagonal elements satisfy the estimate 
    \begin{displaymath}
      |b_{ij}| \le K_{32} (1 - |b_{ii}|), \quad 
        1 \le i < j \le n.
    \end{displaymath}
  \item There exists a positive constant $K_4$ such that 
    for each $M \in \cF$, there exists a Hermitian 
    matrix $H = H(M)$ such that 
    \begin{displaymath}
      K_4^{-1} I \le H \le K_4 I, \quad M^* H M \le H.
    \end{displaymath}
\end{enumerate}
\end{theorem}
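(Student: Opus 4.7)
The plan is to establish the four-way equivalence through a cyclic chain of implications $(1) \Rightarrow (2) \Rightarrow (3) \Rightarrow (4) \Rightarrow (1)$, paralleling the architecture used for \Cref{thm:KMT} in the continuous-time case. For $(1) \Rightarrow (2)$, I would expand the resolvent as a Neumann series: for $|z| > 1$,
\begin{displaymath}
(z I - M)^{-1} = \sum_{\nu = 0}^{\infty} z^{-(\nu+1)} M^{\nu},
\end{displaymath}
and bound term by term to obtain $\|(zI - M)^{-1}\| \le K_1/(|z|-1)$. The reverse step $(4) \Rightarrow (1)$ is equally direct: the Hermitian matrix $H$ induces a norm $\|x\|_H = \langle H x, x \rangle^{1/2}$ in which $M$ is a contraction (since $M^* H M \le H$), so iterating gives $\|M^{\nu} x\|_H \le \|x\|_H$, and the sandwich $K_4^{-1} I \le H \le K_4 I$ converts this into $\|M^{\nu}\| \le K_4$.

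The main obstacle is the implication $(2) \Rightarrow (3)$, which is the analytic heart of the theorem. Starting from a Schur triangularization $S_0 M S_0^{-1} = T_0$ with a unitary $S_0$, I would first permute (preserving unitarity) to order the diagonal entries $|b_{11}| \ge |b_{22}| \ge \cdots \ge |b_{nn}|$, where necessarily $|b_{ii}| \le 1$ since otherwise the resolvent blows up on a circle of radius $> 1$. The key is to control each off-diagonal entry. Writing the $(i,j)$-entry of $(zI - T_0)^{-1}$ as a sum over ascending paths,
\begin{displaymath}
\big[(zI - T_0)^{-1}\big]_{ij} = \sum_{i = k_1 < k_2 < \cdots < k_p = j} \frac{b_{k_1 k_2} b_{k_2 k_3} \cdots b_{k_{p-1} k_p}}{\prod_{\ell = 1}^{p} (z - b_{k_\ell k_\ell})},
\end{displaymath}
and choosing $z$ on a circle of radius slightly greater than $1$ near $b_{ii}$, the bound $K_2/(|z|-1)$ translates into inequalities involving the entries $b_{ij}$ and the gaps $1 - |b_{ii}|$. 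A further diagonal rescaling by $\diag(1, \mu, \mu^2, \ldots, \mu^{n-1})$, composed with $S_0$, can be used to absorb constants so that $|b_{ij}| \le K_{32}(1 - |b_{ii}|)$ uniformly. The bookkeeping of the path sums, together with an induction on $n$ to decouple the upper block from the lower block, is the most delicate aspect of the proof.

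For $(3) \Rightarrow (4)$ I would construct $H$ by explicit diagonal scaling. Writing $M = S^{-1} T S$ with $T$ as in condition (3), the inequality $M^* H M \le H$ with $H = S^* G S$ reduces to $T^* G T \le G$. Choosing $G = \diag(g_1, g_2, \ldots, g_n)$ with weights $g_i$ selected inductively so that the inequality on each $2 \times 2$ block involving $b_{ii}$, $b_{jj}$, and $b_{ij}$ is satisfied, the estimate $|b_{ij}| \le K_{32}(1 - |b_{ii}|)$ from condition (3) ensures that each scalar inequality is solvable with ratios $g_i/g_j$ bounded by a function of $K_{32}$ alone. The resulting $H$ is positive definite, and both $\|H\|$ and $\|H^{-1}\|$ admit bounds depending only on $n$, $K_{31}$, and $K_{32}$. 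Piecing the four implications together closes the cycle. The principal difficulty throughout is to keep the constants dimension-independent where necessary, and in particular to obtain the sharp form $|b_{ij}| \le K_{32}(1 - |b_{ii}|)$ in step $(2) \Rightarrow (3)$ rather than a weaker bound involving extraneous factors of $n$ that cannot subsequently be absorbed.
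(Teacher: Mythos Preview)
The paper does not prove \Cref{thm:KMT_power} at all: it is stated with the attribution ``See Satz 4.1 of \cite{kreiss_stabilitatsdefinition_1962}'' and used as a black box (its condition (3) is invoked in the proof of \Cref{thm:main_power}), so there is no in-paper argument to compare your proposal against. Your plan goes beyond what the paper does, sketching the classical cyclic proof $(1)\Rightarrow(2)\Rightarrow(3)\Rightarrow(4)\Rightarrow(1)$ from Kreiss's original work.

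As a sketch of that classical argument your outline is broadly sound, with the expected soft spots. The implications $(1)\Rightarrow(2)$ and $(4)\Rightarrow(1)$ are complete as written. In $(2)\Rightarrow(3)$, the phrase ``permute (preserving unitarity) to order the diagonal entries'' is slightly off, since a permutation similarity destroys triangularity; what you want is that the Schur decomposition can be built with the eigenvalues appearing in any prescribed order. The path-sum formula and the diagonal rescaling $\diag(1,\mu,\ldots,\mu^{n-1})$ are the right ingredients, though actually extracting $|b_{ij}|\le K_{32}(1-|b_{ii}|)$ from the resolvent bound is where most of the technical labor lives and your description is necessarily schematic. In $(3)\Rightarrow(4)$, checking $T^*GT\le G$ via $2\times2$ blocks is not literally sufficient (the quadratic form couples all indices), but an inductive choice of diagonal weights with controlled ratios does work and is what Kreiss does. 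These are not gaps in a proof \emph{plan}; they are precisely the places where the original 1962 proof earns its keep.
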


Similar to \Cref{thm:main_miller}, we would like to give a 
resolvent estimate for a set of uniformly power-bounded matrices 
$\cF$ at any point in the resolvent set. Given a matrix $M \in \cF$ 
and a number $r > 0$, let us denote 
\begin{displaymath}
  \cT(M, r) := 
  \left\{ z \in \bC : \max_{\lambda \in \sigma(M)} 
  \frac{1 - |\lambda|}{|z - \lambda|} \le \frac{1}{r} \right\}.
\end{displaymath}
We have the following theorem. The second conclusion of 
\Cref{thm:main_power} can be found in \cite{zarouf_sharpening_2009}.
\begin{theorem}\label{thm:main_power}
  If a set of matrices $\cF$ satisfies any condition of 
  \Cref{thm:KMT_power}, then there exists a positive constant $K$ 
  such that 
  \begin{displaymath}
    \norm{(z I - M)^{-1}} \le K \left( 1 + \frac{1}{r} \right)^{n - 1}
    \max_{\lambda \in \sigma(M)} |z - \lambda|^{-1},
  \end{displaymath}
  for all $M \in \cF$, $r > 0$ and $z \in \cT(M, r)$. In particular, 
  one has that 
  \begin{displaymath}
    \sup_{M \in \cF} \sup_{|z| > 1} 
    \frac{\norm{(z I - M)^{-1}}}{\max_{\lambda \in \sigma(M)} 
    |z - \lambda|^{-1}} < + \infty.
  \end{displaymath}
\end{theorem}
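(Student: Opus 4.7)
The plan is to mirror the proof of \Cref{thm:main_miller} almost verbatim, substituting the discrete-time Kreiss matrix theorem \Cref{thm:KMT_power} for its continuous-time counterpart \Cref{thm:KMT}. I fix $M \in \cF$, $r > 0$, and $z \in \cT(M, r)$, and invoke the third condition of \Cref{thm:KMT_power} to obtain a Schur-like decomposition $SMS^{-1} = D + N$ with $\norm{S} + \norm{S^{-1}} \le K_{31}$, where $D = \diag(b_{11}, \ldots, b_{nn})$ collects the eigenvalues ordered so that $1 \ge |b_{11}| \ge \cdots \ge |b_{nn}|$ and $N$ is strictly upper triangular with $|b_{ij}| \le K_{32}(1 - |b_{ii}|)$.

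Next I would rewrite the resolvent as
\[
(zI - M)^{-1} = S^{-1}\bigl(I - (zI-D)^{-1}N\bigr)^{-1}(zI-D)^{-1}S,
\]
so that $\norm{(zI - M)^{-1}} \le \kappa(S) \cdot \norm{(I - (zI-D)^{-1}N)^{-1}} \cdot \norm{(zI-D)^{-1}}$, with $\kappa(S) \le K_{31}^2/4$ and $\norm{(zI-D)^{-1}} = \max_{\lambda \in \sigma(M)} |z - \lambda|^{-1}$. The key entrywise estimate, replacing \cref{equ:entry_estimate}, is
\[
\frac{|b_{ij}|}{|z - b_{ii}|} \le \frac{K_{32}(1 - |b_{ii}|)}{|z - b_{ii}|} \le \frac{K_{32}}{r},
\]
where the last step is exactly the defining condition of $z \in \cT(M, r)$ applied to the eigenvalue $\lambda = b_{ii}$. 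Feeding this bound on the unit upper triangular matrix $I - (zI-D)^{-1}N$ into \Cref{lemma:unittri} yields $\norm{(I - (zI-D)^{-1}N)^{-1}} \le C(1 + 1/r)^{n-1}$ for a constant $C$ depending only on $n$ and $K_{32}$, completing the first conclusion.

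For the ``in particular'' statement, I would observe that if $|z| > 1$ and $\lambda \in \sigma(M)$, then by condition~3 of \Cref{thm:KMT_power} we have $|\lambda| \le 1$, so the triangle inequality gives $|z - \lambda| \ge |z| - |\lambda| \ge 1 - |\lambda|$, hence $(1 - |\lambda|)/|z - \lambda| \le 1$ and $\{z \in \bC : |z| > 1\} \subseteq \cT(M, 1)$. Applying the first conclusion with $r = 1$ then yields the uniform bound. I do not foresee a major obstacle here, since the entire argument is a direct transcription of the continuous-time case; the one point deserving care is using the sharp form $|b_{ij}| \le K_{32}(1 - |b_{ii}|)$ rather than the cruder $|b_{ij}| \le K_{32}$, so that the geometric condition $z \in \cT(M, r)$ converts cleanly into the entrywise bound $K_{32}/r$ that drives the $(1 + 1/r)^{n-1}$ factor through \Cref{lemma:unittri}.
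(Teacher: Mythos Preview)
Your proposal is correct and follows essentially the same approach as the paper's proof: invoke the third condition of \Cref{thm:KMT_power}, use the entrywise bound $|b_{ij}|/|z-b_{ii}| \le K_{32}/r$ coming from the definition of $\cT(M,r)$, and then appeal to \Cref{lemma:unittri} exactly as in the proof of \Cref{thm:main_miller}. The second conclusion via the inclusion $\{|z|>1\}\subseteq\cT(M,1)$ is also handled identically.
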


\begin{proof}
  Let us take $M \in \cF$, $r > 0$ and $z \in \cT(M, r)$. 
  Using the notations in the third condition of \Cref{thm:KMT_power}, 
  for each $1 \le i < j \le n$, one can obtain that 
  \begin{displaymath}
    \frac{|b_{ij}|}{|z - b_{ii}|} \le 
    \frac{K_{32} (1 - |b_{ii}|)}{|z - b_{ii}|} \le \frac{K_{32}}{r}.
  \end{displaymath}
  The remainder of the proof of the first conclusion is analogous to 
  that in \Cref{thm:main_miller}. The second conclusion follows from 
  an observation that 
  \begin{displaymath}
    |z - \lambda| \ge |z| - |\lambda| > 1 - |\lambda| \ge 0,
  \end{displaymath}
  for each $\lambda \in \sigma(M)$ and $z \in \bC$ with $|z| > 1$, 
  and thus $z \in \cT(M, 1)$.
\end{proof}

\end{document}